\newtheorem{theorem}{Theorem}
\newtheorem{corollary}{Corollary}
\newtheorem{definition}{Definition}
\newtheorem{conjecture}{Conjecture}
\journal{arXiv.org} 
\begin{document}

\begin{frontmatter}

%% Title, authors and addresses
%% use the tnoteref command within \title for footnotes;
%% use the tnotetext command for theassociated footnote;
%% use the fnref command within \author or \address for footnotes;
%% use the fntext command for theassociated footnote;
%% use the corref command within \author for corresponding author footnotes;
%% use the cortext command for theassociated footnote;
%% use the ead command for the email address,
%% and the form \ead[url] for the home page:

\title{Representations of code loops by binary codes}%\tnoteref{t1,t2}}

\author[1]{Rosemary Miguel Pires}%\corref{cor1}}
\ead{rosemarypires@id.uff.br}
\author[2]{Alexandre Grishkov}
\ead{shuragri@gmail.com}
\author[3]{Marina Rasskazova}
\ead{marinarasskanova1@gmail.com}

%\cortext[cor1]{Corresponding author}
%\fntext[fnPa]{This is the first author footnote.}
%\fntext[cor2]{The third author had financial support by FAPESP(Brazil), grant 2018/11292-6.}
%\fntext[fnRo]{Yet another author footnote.}
%\fntext[fnAr]{Para colocar?}
\address[1]{Departamento de Matem\'atica, Instituto de Ci\^encias Exatas, Universidade Federal Fluminense, 27213-145, Volta Redonda - RJ, Brazil}
\address[2]{Departamento de Matem\'atica, Universidade de S\~ao Paulo, Caixa Postal 66281, 05311-970, S\~ao Paulo - SP, Brazil}
\address[3]{Omsk State Technical University, Omsk, Russia}

\begin{abstract}
Code loops are Moufang loops constructed from doubly even binary codes. Then, given a code loop $L$, we ask which doubly even binary code $V$ produces $L$. In this sense, $V$ is called a representation of $L$. In this article we define and determine all minimal and reduced representations of nonassociative code loops of rank $3$ and $4$.
\end{abstract}

\begin{keyword}
binary codes, code loops, characteristic vectors, representations.

%\MSC[2010] ... (primary)  ..., ..., ..., ... (secondary)

\end{keyword}

\end{frontmatter}

%\linenumbers

\section{Introduction}

Code loops were introduced by Robert Griess (1986, \cite{2}) from doubly even binary codes as follows. 

Let $K=\mathbb{F}_{2}=\{0,1\}$ be a field with two elements and $K^n$ be an $n$-dimensional vector space over $K$. Let $u=(u_1,\dots,u_n)$ and $v=(v_1,\dots,v_n)$ in $K^n$ and define $|v|=|\{i\; |\;v_i=1\}|$ (the Hamming weight)  and  $|u \cap v|=|\{i\; |\;u_i=v_i=1\}|.$ A \textit{doubly even binary code} is a subspace $V\subseteq{K^{n}}$ such that $|v|\equiv{0\;}(\mbox{mod}{\;4})$.
%% and \linebreak $|u \cap v|\equiv{0\;}(\mbox{mod}{\;2})$ for all $u,v \in V$.
%

Let $V$ be a doubly even binary code and the function called \textit{factor set} \linebreak $\phi:V\times V\to \{1,-1\}$ defined by: 

\begin{align}
\phi(v,v)&=(-1)^{\frac{|v|}{4}}, \nonumber \\
\phi(v,w)&=(-1)^{\frac{|v{\cap}w|}{2}}\phi(w,v), \nonumber \\
\phi(0,v)&=\phi(v,0)=1,   \nonumber \\
\phi(v+w,u)&=\phi(v,w+u)\phi(v,w)\phi(w,u)(-1)^{|v{\cap}w{\cap}u|}. \nonumber
\end{align}

In order to define a code loop, let $V$ be a doubly even code and $\phi:V\times V\to \{1,-1\}$ be a factor set. Consider the cartesian product $L(V)= \{1,-1\} \times V $ and define a product $`\cdot'$ in $L(V)$ in the following way
\begin{align}
&v.w = \phi(v,w)(v + w), \nonumber\\
&v.(-w) = (-v).w = -(v.w), \nonumber\\
&(-v).(-w) = v.w. \nonumber
\end{align} 

With this product, we obtain a Moufang loop called \textit{code loop}. We say that $L(V)$ has \textit{rank} $m$, if the dimension of the $K$-vector space $V$ is igual to $m$. Note that $v \in L(V)$ and $-v \in L(V)$ means the elements $(1,v)$ and $(-1,v)$, respectively.

It's important to recall that code loops have some useful properties related to the commutator, associator, and square associated with their elements that we will use to get the main results presented in this article. Chein and Goodaire (1990, \cite{4}) proved that  code loops have a unique nonidentity square, a unique nonidentity commutator, and a unique nonidentity associator. In other words, for any $u,v,w \in V$: 
\begin{align}
v^2 &= (-1)^{\frac{|v|}{4}}0,\nonumber \\
\left[u,v\right] &= u^{-1}v^{-1}uv = (-1)^{\frac{|u \cap v|}{2}}0, \nonumber\\
(u,v,w)&=((uv)w)( (u(vw))^{-1})=(-1)^{|u\cap v\cap w|}0. 
\end{align}

So far we've shown how to build code loops from doubly even binary codes, but we would like to investigate to the other side. In other words, given a certain code loop $L$, we would like to determine the doubly even codes $V$ such that $L\simeq L(V)$. 

A {\textit{representation}} of a given code loop $L$ is a doubly even code $V \subseteq {K^{m}}$ such that $L \simeq L(V)$ and the number $m$ is called \textit{degree of the representation}. We notice that there are many different representations for a same code loop.

In this article we determine all minimal and reduced representations of nonassociative code loops of rank $3$ and $4$ (Sections 3 and 4). In order to do this, first, it is necessary to know all these code loops, up to isomorphism. In \cite{AR}, these code loops were classified using the concept of a characteristic vector associated with them. We recall the main theorems of classification of nonassociative code loops of rank $3$ and $4$ in Section 2. 

\section{Nonassociative Code Loops of Rank 3 and 4}\label{section2}

Let $L$ be a code loop with generator set $X=\left\{x_{1},\dots,x_{n}\right\}$ and center $\left\{1,-1\right\}$. Then we define the characteristic vector of $L$, denoted by $\lambda_{X}(L)$ or $\lambda(L)$, by \[\lambda(L)=(\lambda_{1},\dots,\lambda_{n};\lambda_{12},\dots,\lambda_{1n},\dots,\lambda_{(n-1)n};\lambda_{123},\dots,\lambda_{12n},\dots,\lambda_{(n-2)(n-1)n}),\] where $\lambda_{i}, \lambda_{ij},\lambda_{ijk} \in \mathbb{F}_{2}$, $(-1)^{\lambda_{i}}=x_{i}^{2},\; (-1)^{\lambda_{ij}}=[x_{i},x_{j}]\; \mbox{and}\;  (-1)^{\lambda_{ijk}}=(x_{i},x_{j},x_{k})$.

%(e.g. $\{x\in L|[x,a]=(x,a,b)=1, \forall a,b\in L\}.$).
Here, $[x,y]$ denotes the commutator of $x$ and $y$, and $(x,y,z)$ denotes the associator of $x,y$ and $z$. 

Let $L$ be a nonassociative code loop of rank $3$ with generators $a,b,c.$ Then the associator $(a,b,c)$ is equal to $-1$. The characteristic  vector associated to $L$ is given by $\lambda(L) = (\lambda_1,...,\lambda_6)$ (or simply $\lambda(L) = (\lambda_1...\lambda_6)$) where \linebreak $a^{2}=(-1)^{\lambda_1}$, $b^{2}=(-1)^{\lambda_2}$, $c^{2}=(-1)^{\lambda_3}$, $\left[a,b\right]=(-1)^{\lambda_4}$, $\left[a,c\right]=(-1)^{\lambda_5}$ and $\left[b,c\right]=(-1)^{\lambda_6}.$

\begin{theorem}[\cite{AR}]\label{th2.1}
	Consider $C_{1}^{3},...,C_{5}^{3}$ the code loops with the following characteristic vectors:
	\begin{eqnarray*}
		\begin{array}{lllll}
			\lambda(C_{1}^{3})=(111111), &&\lambda(C_{2}^{3})=(000000),&&\lambda(C_{3}^{3})=(000111),\\
			\lambda(C_{4}^{3})=(110000),&&\lambda(C_{5}^{3})=(100000).&&\\
		\end{array}
	\end{eqnarray*}
	
	Then any two loops from the list $\left\{C_{1}^{3},...,C_{5}^{3}\right\}$  are not isomorphic and all nonassociative
	code loop of rank $3$ is isomorphic to one of this list.
\end{theorem}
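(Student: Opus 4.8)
The plan is to recast the classification as an orbit problem for $GL_3(\mathbb{F}_2)$ and then to separate the five classes by numerical invariants while pinning down the total count via orbit sizes. Since squares, commutators and associators all lie in the central subgroup $\{1,-1\}$, the formulas (1) show that squaring descends to a function $P_0\colon V\to\mathbb{F}_2$, $P_0(v)=\tfrac{|v|}{4}\bmod 2$, the commutator to $P_1(u,v)=\tfrac{|u\cap v|}{2}\bmod 2$, and the associator to the trilinear $P_2(u,v,w)=|u\cap v\cap w|\bmod 2$. The weight identity $|u+v|=|u|+|v|-2|u\cap v|$ and its intersection refinement give the collection formulas
\begin{align}
P_0(u+v) &= P_0(u)+P_0(v)+P_1(u,v), \nonumber \\
P_1(u+v,w) &= P_1(u,w)+P_1(v,w)+P_2(u,v,w). \nonumber
\end{align}
Replacing a generator $x_i$ by $-x_i$ alters none of these values, so the only relevant isomorphisms come from $\mathbb{F}_2$-linear changes of generating set, i.e.\ from $GL_3(\mathbb{F}_2)$. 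Because $P_0$ determines $P_1,P_2$ as its successive differences, a characteristic vector with $(a,b,c)=-1$ is exactly the datum of a cubic Boolean function $f=P_0$ on $\mathbb{F}_2^3$ with $f(0)=0$ whose degree-$3$ coefficient is $1$; there are precisely $2^6=64$ such functions. A substitution $x\mapsto Ax$ multiplies the top coefficient by $\det A=1$, so $GL_3(\mathbb{F}_2)$ permutes these $64$ functions, and (using that a code loop is recovered up to isomorphism from $(P_0,P_1,P_2)$, the coboundary ambiguity of the factor set being harmless) two of the loops are isomorphic exactly when the associated functions lie in one $GL_3(\mathbb{F}_2)$-orbit.

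For the non-isomorphism statement I would introduce two isomorphism invariants: $N(L)=\#\{v\in V: v^2=-1\}$, which is $GL_3(\mathbb{F}_2)$-invariant because the group permutes the nonzero points, together with the observation of whether $\{v:v^2=-1\}\cup\{0\}$ is a subspace. Extending $P_0$ to all of $V$ by the first collection formula and computing, one finds $N=7,1,3,5,3$ for $C_1^3,\dots,C_5^3$; the only coincidence is $C_3^3$ and $C_5^3$, which are then separated since for $C_3^3$ the set $\{v:v^2=-1\}\cup\{0\}$ is a two-dimensional subspace while for $C_5^3$ it is not. Hence the five loops are pairwise non-isomorphic.

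For completeness I would show the five orbits exhaust all $64$ functions by an orbit-size count. Each representative $f$ has a transparent description of $f^{-1}(1)$ (all nonzero points for $C_1^3$; a single point for $C_2^3$; the nonzero points of a hyperplane for $C_3^3$; and so on), so its stabilizer in $GL_3(\mathbb{F}_2)$ is directly computable, with orders $168,24,24,8,6$ and hence orbit sizes $1,7,7,21,28$. These orbits are disjoint by the invariants above, and $1+7+7+21+28=64$, so every characteristic vector with nontrivial associator is $GL_3(\mathbb{F}_2)$-equivalent to exactly one of $C_1^3,\dots,C_5^3$.

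The main obstacle, and the step needing the most care, is the bookkeeping that legitimizes this reduction: establishing the commutator collection formula with its associator correction term, and confirming that $GL_3(\mathbb{F}_2)$-equivalence of the functions genuinely coincides with loop isomorphism (that no further identifications arise from the choice of factor set). Once this foundation is secured, the invariant computations and the stabilizer/orbit-size arithmetic are routine finite verifications.
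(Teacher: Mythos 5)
The paper itself offers no proof of Theorem \ref{th2.1}: it is quoted from \cite{AR}, so there is no in-text argument to compare yours against. Judged on its own terms, your proposal is a correct and complete strategy, and it is in essence the same framework that underlies the characteristic-vector classification of \cite{AR} (a characteristic vector with nontrivial associator is exactly a cubic Boolean function $f$ on $\mathbb{F}_2^3$ with $f(0)=0$ and top coefficient $1$, and isomorphism becomes $GL_3(\mathbb{F}_2)$-equivalence), packaged with a clean orbit-counting finish. Your numerical checks are right: extending $P_0$ by the polarization identities gives supports of sizes $7,1,3,3,5$ for $C_1^3,C_2^3,C_3^3,C_5^3,C_4^3$ respectively, with $C_3^3$ supported on the nonzero points of a plane and $C_5^3$ on a basis, so the listed invariants do separate all five; the stabilizer orders $168,24,24,8,6$ and orbit sizes $1,7,7,21,28$ are correct, and $1+7+7+21+28=64$ exhausts the $2^6$ admissible functions (equivalently, the $64$ odd-cardinality subsets of $\mathbb{F}_2^3\setminus\{0\}$, consistent with the top coefficient equaling $\sum_v f(v)$). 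The one ingredient you defer --- that two code loops are isomorphic if and only if their triples $(P_0,P_1,P_2)$ are linearly equivalent, i.e.\ that the factor set is determined up to the harmless sign/coboundary ambiguity by these data --- is a genuine theorem that your sketch correctly identifies as the load-bearing step; it is the standard result of Griess and Chein--Goodaire on unique nonidentity squares, commutators and associators, and it is precisely what legitimizes the characteristic-vector formalism used throughout this paper, so relying on it is appropriate rather than a gap.
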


%There are exactly 5 nonassociative code loops of rank $3$ (up to isomorphism), namely, $C_{1}^{3},C_{2}^{3},C_{3}^{3},C_{4}^{3},C_{5}^{3}$ which their corresponding characteristic vector are given by

The omitted seventh coordinate of  all these characteristic vectors is equal to one (nonassociative case).

For the study of the code loops of rank $4$, we assume that $X=\left\{a,b,c,d\right\}$ is a basis of a nonassociative code loop $L$. In this case $L$ has only one nontrivial associator $(a,b,c)=-1$ and $N(L)=~\textsl{F}_{2}d$ (nucleus of $L$). Thus, a characteristic vector of $L$ is given by $\lambda_{X}(L)=(\lambda_{1},\dots,\lambda_{10})$ (or simply $\lambda(L)=(\lambda_{1}\dots\lambda_{10})$), where $$a^{2}=(-1)^{\lambda_1}, b^{2}=~(-1)^{\lambda_2},c^{2}=(-1)^{\lambda_3}, d^{2}=(-1)^{\lambda_4},$$
$$ \left[a,b\right]=(-1)^{\lambda_5}, \left[a,c\right]=~(-1)^{\lambda_6}, \left[a,d\right]=(-1)^{\lambda_7},$$
$$\left[b,c\right]=~(-1)^{\lambda_8}, \left[b,d\right]=(-1)^{\lambda_9},\left[c,d\right]=(-1)^{\lambda_{10}}.$$

\begin{theorem}[\cite{AR}]\label{th2.2}
	Consider $C_{1}^{4},...,C_{16}^{4}$ the code loops with the following characteristic vectors.
	All nonassociative code loop of rank $4$ is isomorphic to one from the list. Moreover, none of those loops are isomorphic to each other.
	\begin{table}[!hhh]
		\centering
		{\begin{tabular}{llll|lllll|l}
				&  &  & $L$ & $\lambda(L)$ &  &  &  & $L$ & $\lambda(L)$ \\
				\cline{4-5}\cline{9-10}
				&  &  & $C^{4}_{1}$ & $(1110110100)$ &  &  &  & $C^{4}_{9}$ & $(0100001000)$ \\
				&  &  & $C^{4}_{2}$ & $(0000000000)$ &  &  &  & $C^{4}_{10}$ & $(0001111000)$ \\
				&  &  & $C^{4}_{3}$ & $(0000110100)$ &  &  &  & $C^{4}_{11}$ & $(0001001000)$ \\
				&  &  & $C^{4}_{4}$ & $(0010100000)$ &  &  &  & $C^{4}_{12}$ & $(0000001100)$ \\
				&  &  & $C^{4}_{5}$ & $(0000010100)$ &  &  &  & $C^{4}_{13}$ & $(0110111100)$ \\
				&  &  & $C^{4}_{6}$ & $(1111110100)$ &  &  &  & $C^{4}_{14}$ & $(0001001100)$ \\
				&  &  & $C^{4}_{7}$ & $(0001000000)$ &  &  &  & $C^{4}_{15}$ & $(1001001100)$ \\
				&  &  & $C^{4}_{8}$ & $(0000001000)$ &  &  &  & $C^{4}_{16}$ & $(0001111100)$ \\
		\end{tabular}}
		\label{tabvc4}
	\end{table}
	
\end{theorem}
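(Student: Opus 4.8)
The plan is to translate the classification into a problem about a single cubic Boolean polynomial and then analyze its linear-equivalence orbits. The key observation is that the squaring function $P\colon V\to\mathbb{F}_2$, $P(v)=\frac{|v|}{4}\bmod 2$, is, by the factor-set identities (equivalently by the square, commutator and associator relations), a multilinear polynomial of degree at most $3$ on $V\cong\mathbb{F}_2^4$, whose first and second polarizations are exactly the commutator form $C(u,v)=\frac{|u\cap v|}{2}\bmod 2$ and the (alternating) associator form $A(u,v,w)=|u\cap v\cap w|\bmod 2$. Writing $P=\sum c_i x_i+\sum_{i<j}c_{ij}x_ix_j+\sum_{i<j<k}c_{ijk}x_ix_jx_k$ in coordinates dual to the basis $a,b,c,d$, a direct polarization check shows the coefficients are literally the characteristic vector: the $c_i$ are the squares $\lambda_1,\dots,\lambda_4$, the $c_{ij}$ are the commutators $\lambda_5,\dots,\lambda_{10}$, and the $c_{ijk}$ are the associators. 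In the nonassociative rank-$4$ case the cubic part is forced to be $x_1x_2x_3$, since $A$ is the alternating form of rank $3$ with radical the nucleus $\mathbb{F}_2 d$.

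An isomorphism of code loops fixing the center corresponds to a $T\in GL(V)$ with $P\circ T=P$, hence preserving $A$ and its radical. Thus $T(d)=d$ and $T$ induces an element of $GL_3(\mathbb{F}_2)$ on $V/\langle d\rangle$ preserving the determinant form; since over $\mathbb{F}_2$ every invertible matrix has determinant $1$, the whole of $GL_3(\mathbb{F}_2)$ occurs, while the unipotent nucleus-shifts $a\mapsto a+d$, $b\mapsto b+d$, $c\mapsto c+d$ also preserve $A$. Hence the relevant symmetry group is the stabilizer $G=\mathrm{Stab}(x_1x_2x_3)$, an extension $1\to\mathbb{F}_2^3\to G\to GL_3(\mathbb{F}_2)\to 1$ of order $1344$, acting on the $2^{10}$ characteristic vectors through their quadratic and linear parts. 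Theorem~\ref{th2.2} is then exactly the assertion that this action has $16$ orbits, with the tabulated representatives.

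For \textbf{completeness} I would reduce an arbitrary $\lambda$ to normal form in stages. First, the subloop generated by $a,b,c$ is a nonassociative code loop of rank $3$, so by Theorem~\ref{th2.1} a suitable element of $GL_3(\mathbb{F}_2)$ brings the $W$-data $(\lambda_1,\lambda_2,\lambda_3;\lambda_5,\lambda_6,\lambda_8)$ to one of the five forms $\lambda(C_1^3),\dots,\lambda(C_5^3)$. Next I would work out the action of the nucleus-shifts on $\lambda$ — for instance $a\mapsto a+d$ sends $\lambda_5\mapsto\lambda_5+\lambda_9$, $\lambda_6\mapsto\lambda_6+\lambda_{10}$, $\lambda_1\mapsto\lambda_1+\lambda_4+\lambda_7$, fixing the rest — and use these together with the residual stabilizer of the chosen rank-$3$ form to normalize the remaining $d$-data $(\lambda_4;\lambda_7,\lambda_9,\lambda_{10})$. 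This is a finite computation producing at most $16$ forms, which one matches against the table.

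For \textbf{non-isomorphism} I would exhibit a separating tuple of isomorphism invariants. Because the nucleus $\langle d\rangle$, the commutator form $C$ and the squaring function $P$ are all canonical, the following are genuine invariants: the value $d^2=(-1)^{\lambda_4}$; whether $d$ is central, i.e. $(\lambda_7,\lambda_9,\lambda_{10})=0$; the count $N_-=\#\{v\in V:v^2=-1\}=|P^{-1}(1)|$; and the analogous counts of square-$(-1)$ elements in the canonical subspaces built from $\langle d\rangle$ and the radical of $C$. Evaluating this tuple on the $16$ representatives and checking that it takes $16$ distinct values shows that none are isomorphic. The \textbf{main obstacle} is the finite-but-delicate orbit analysis under the order-$1344$ group: the nucleus-shifts couple the $d$-commutators back into the squares and commutators on $W$ through the fixed cubic term, so one must track this mixing carefully to be sure that exactly $16$ classes arise and that the chosen invariants genuinely separate all of them; selecting a provably complete set of invariants is what simultaneously finishes both directions.
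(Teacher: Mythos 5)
The paper does not actually prove Theorem~\ref{th2.2}: it is imported from \cite{AR}, so there is no in-paper argument to compare against. Your framework is nonetheless the right one and is essentially the characteristic-vector method of that source, rephrased in polynomial language: encode the loop by $P(v)=\tfrac{|v|}{4}\bmod 2$, whose successive derived forms are the commutator and associator; normalize the cubic part to $x_1x_2x_3$ (every nonzero alternating trilinear form on $\mathbb{F}_2^4$ has a one-dimensional radical, here the nucleus $\mathbb{F}_2 d$); and identify isomorphism classes with orbits of the order-$1344$ stabilizer of $x_1x_2x_3$ acting on the remaining $2^{10}$ coefficient vectors. The details you do check are correct: the extension $1\to\mathbb{F}_2^3\to G\to GL_3(\mathbb{F}_2)\to 1$, and the transformation law for $a\mapsto a+d$ (the associator corrections vanish precisely because $d$ lies in the nucleus). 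The staged normalization --- first the rank-$3$ block via Theorem~\ref{th2.1}, then the $d$-data via nucleus-shifts and the residual stabilizer --- is exactly how such a classification is organized.

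The genuine gap is that the theorem is, in the end, the outcome of the finite computation you defer, and nothing in the proposal certifies either half of it. You have not exhibited the reduction of an arbitrary $\lambda$ to one of the sixteen listed forms (the nucleus-shifts feed $\lambda_4,\lambda_7,\lambda_9,\lambda_{10}$ back into the rank-$3$ block, so the two normalization stages interact and must be iterated to an actual set of orbit representatives), and you have not verified that your invariants take sixteen distinct values on the table. The latter is not automatic: several listed vectors differ only in non-canonical coordinates such as $\lambda_1=a^2$ (e.g.\ $C_{14}^4$ and $C_{15}^4$), so separating them genuinely requires evaluating quantities like $N_-=|P^{-1}(1)|$ on each representative (here $4$ versus $12$, so this pair is separated, but all pairs must be checked, and it must be argued that equality of the invariant tuple implies equivalence, or else completeness must be established independently by the orbit count). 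Until that enumeration is carried out, by hand or by machine, what you have is a correct and well-motivated plan rather than a proof.
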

%%%%%%%%%%%%%%%%%%%%%%%%%%%%%
\section{Representations of Code Loops}

We proved by Theorems \ref{th3.4} and \ref{theorem3.6} below that, there are representations of nonassociatives code loops of rank $3$ and $4$ such that the degree of each representation is the smallest possible.

\begin{definition}
	A representation $V$ is called \textbf{minimal} if the degree of $V$ is minimal.
\end{definition}

We identify the ${\mathbb F}_{2}$-space ${{\mathbb F}_{2}^{m}}$ as the set of all subsets of $I_{m}=\left\{1,\dots,m\right\}$ and we define a relation of  equivalence $\sim$ on $I_{m}$:
$i \sim j$ if and only if  $\left\{i,j\right\} \cap v = \left\{i,j\right\}$ or $\left\{i,j\right\} \cap v = \emptyset$, for all $ v \in V$.

Note that this definition is equivalent to: $i \sim j$ if and only if $\left\{i,j\right\} \cap ~ v_{k} = \left\{i,j\right\}$ or $\left\{i,j\right\} \cap ~ v_{k} = \emptyset$, $k=1,\dots,s$ and $\left\{v_{1},\dots,v_{s}\right\}$ is a basis of $V$.

We will consider only representations such that, for any equivalence class $X$, we have $|X| < 8.$ We call these representations by \textbf{reduced representations}.

%Our main problem is to find all minimal representations for a given code loop.

\begin{definition}
	Let $V$ be a $m$-degree representation of a code loop $L$ and  $X_{1},...,X_{r}$ the equivalence classes over $I_m$. We define the \textbf{type} of $V$ as the vector $(|X_{1}|,...,|X_{r}|)$ such that $|X_{1}| \leq |X_{2}| \leq ...\leq |X_{r}|$. When necessary, we can write $(|X_{1}|...|X_{r}|)$.
\end{definition} 

\begin{definition}
	Let $V_{1}$ and $V_{2}$ be doubly even binary codes of ${\mathbb F}_{2}^{m}$. We say that $V_{1}$ and $V_{2}$ are  isomorphic even codes if and only if there is a bijection $\varphi \in S_{m}$ such that $V_{1}^{\varphi}=V_{2}$.
\end{definition}

%%%%%%%
%%%%%%%%%%%%%%%%%%%%%%%%%%%%%%%%%%%%%%%%%%%%%%%%%%%%%%%%%seção de representações reduzidas de posto 3
\subsection{Reduced Representations of the Code Loops of rank $3$}\label{sec3.1}

Let $C_{i}^{3}$, $i=1,\dots,5,$ the nonassociative code loops of rank $3$ as shown in Section \ref{section2}. We will show how to find all the corresponding reduced representations of these code loops. Denote these representations by $V_{i}^{3}$, $i=1,\dots,5$.

Let $V$ be a doubly even binary code and $v \in V$. If $v^{2}=-1$ then $|v|\equiv{4\;}(\mbox{mod}{\;8})$. Otherwise, we have $|v|\equiv{0\;}(\mbox{mod}{\;8}).$

Now let $u,v \in V$. As a consequence of the definition of doubly even binary code, we obtain $|u\cap v|\equiv{0\;}(\mbox{mod}{\;2})$. Then in the case $[u,v]=-1$, we obtain $|u\cap v|\equiv{2\;}(\mbox{mod}{\;4})$ and in the other case, $|u\cap v|\equiv{0\;}(\mbox{mod}{\;4}).$ 

From these observations, we show in Table \ref{relations} the relations between generators of each representation $V_{i}^{3}$. These relations are important to determine $V_{i}^{3}$, $i=1,\cdots,5$.

\begin{table}[ht]
	\begin{center}
		\caption{Relations between generators of $V_{i}^{3}.$ \label{relations}}
	{\begin{tabular}{@{}ccl@{}} \hline
			$i$ & $\lambda(C_{i}^{3})$ & \ \ \ \ \ \ \ Relations   \\ 			 \hline
			$1$ & $(1,1,1,1,1,1)$ & $|v_{1}|\equiv |v_{2}|\equiv |v_{3}|\equiv4\;(\mbox{mod} \; 8)$  \\
			&                 & $|v_{1}\cap v_{2}|\equiv |v_{1}\cap v_{3}|\equiv |v_{2}\cap v_{3}|\equiv2\;(\mbox{mod}\;4)$ \\
			$2$ & $(0,0,0,0,0,0)$ & $|v_{1}|\equiv |v_{2}|\equiv |v_{3}|\equiv0\;(\mbox{mod} \; 8)$  \\
			&                 & $|v_{1}\cap v_{2}|\equiv |v_{1}\cap v_{3}|\equiv |v_{2}\cap v_{3}|\equiv0\;(\mbox{mod}\;4)$ \\
			$3$ & $(0,0,0,1,1,1)$ & $|v_{1}|\equiv |v_{2}|\equiv |v_{3}|\equiv0\;(\mbox{mod} \; 8)$  \\
			&                 & $|v_{1}\cap v_{2}|\equiv |v_{1}\cap v_{3}|\equiv |v_{2}\cap v_{3}|\equiv2\;(\mbox{mod}\;\;4)$\\
			$4$ & $(1,1,0,0,0,0)$ & $|v_{1}|\equiv |v_{2}|\equiv4\;(\mbox{mod}\; 8)$  \\
			&                 & $|v_{3}|\equiv0\;(\mbox{mod}\;8)$\\
			&                 & $|v_{1}\cap v_{2}|\equiv |v_{1}\cap v_{3}|\equiv |v_{2}\cap v_{3}|\equiv0\;(\mbox{mod}\;4)$\\
			$5$ & $(1,0,0,0,0,0)$ & $|v_{1}|\equiv4\;(\mbox{mod}\; 8)$  \\
			&                 & $|v_{2}|\equiv|v_{3}|\equiv0\;(\mbox{mod}\;8)$\\ 
			&                & $|v_{1}\cap v_{2}|\equiv |v_{1}\cap v_{3}|\equiv |v_{2}\cap v_{3}|\equiv0\;(\mbox{mod}\;4)$\\\hline
	\end{tabular}}
	\end{center}
\end{table}

Let $\left\{v_{i_{1}},v_{i_{2}},v_{i_{3}}\right\}$ be a set of generators of a reduced representation  $V_{i}^{3}$. To simplify the notation, we will write $v_1$, $v_2$ and $v_3$ instead of $v_{i_1}$, $v_{i_2}$ and $v_{i_3}$. We also denote by $t_i=|v_i|$, $t_{ij}=|v_i \cap v_j|$ and $|v_{1}\cap v_{2}\cap v_{3}|= t_{123}$. We have $t_{123}\equiv{1\;}(\mbox{mod}{\;2}),$ since $(v_{1},v_{2},v_{3})=-1$. Because this, it does not happen $v_{j}\subset v_{k}$, $j\neq k$ or $v_{j}\cap v_{k} = \emptyset$. If $v_{1}\subset v_{2}$, for example, we would have $t_{123}\equiv t_{13}\equiv {0\;}(\mbox{mod}{\;2}),$ which would be absurd because of what was seen above. If $v_{2}\cap v_{3} = \emptyset$, for example, we would have $t_{123}\equiv t_{1}\equiv{0\;}(\mbox{mod}{\;4})$, which also can not happen.  

Let's see now how to find a reduced representation. First, we suppose that the degree of  $V_{i}^{3}$ is $m$ and we consider $X$ an equivalence class over $I_{m}=\{1,2,\dots,m\}$ such that $|X|<8$. 

%The elements of $V_{i}^{3}$ are considered as subsets of $I_{m}$.
Since $i \sim j$ if and only if $\left\{i,j\right\} \cap ~ v_{k} = \left\{i,j\right\}$ or $\left\{i,j\right\} \cap ~ v_{k} = \emptyset$, for $k=1,2,3$, and $\left\{v_{1},v_{2},v_{3}\right\}$ is a basis of $V_{i}^{3}$, then we only have \;$7$\; equivalence classes given by the elements of the  sets $v_{1}\cap v_{2}\cap v_{3}$, $(v_{1}\cap v_{2})\setminus(v_{1}\cap v_{2}\cap v_{3})$, $(v_{1}\cap v_{3})\setminus(v_{1}\cap v_{2}\cap v_{3})$, $(v_{2}\cap v_{3})\setminus(v_{1}\cap v_{2}\cap v_{3})$, $v_{1}\setminus((v_{1}\cap v_{2})\cup(v_{1}\cap v_{3}))$, $v_{2}\setminus((v_{1}\cap v_{2})\cup(v_{2}\cap v_{3}))$ and $v_{3}\setminus((v_{1}\cap v_{3})\cup(v_{2}\cap v_{3}))$, denoted here, respectively, by $X_{123}, X_{12}, X_{13}, X_{23}, X_{1}, X_{2}$ and $X_{3}$.  As each class has a maximum of $7$ elements, then $t_{123}=1,3,5$ or $7$, $t_{ij}=2,4,6,10$ or $14$ and $t_{i}=4,8,12,16,20,24$ or $28$. Then,  $7\leq m \leq 49$.

%%%%%%%%%%%%%%%%%%%%%%%%%%%%%%%%%%%%%%%%%%%%%%%%%%%%%%%%%%%%%%%%%%%%%%%%%%%%%%%%%%%%%%%%%%%%%%%%%%%%%%%%%%%%%%%%%%%%%%%%%%%%%%%%%%%%%555
%%%%%%%%%%%%%%%%%%%%%%%%%%%%%%%%%%%%%%%%%%%%%%%%%%%%%%%%%%%%%%%%%%%%%%%%%%%%%%%%%%%%%%%%%%%%%%%%%%%%%%%%%%%%%%%%%%%%%%%%%%%%%%%%%%%%%555

The cardinality of the sets $X_{123}$, $X_{ij}$ and $X_{i}$ are denoted by $x_{123}$, $x_{ij}$ or $x_{i}$; $i,j,k=1,2,3$, respectively. To determine a reduced representation of a code loop of rank 3, first we choose values for $t_{123}$ ($t_{123}=x_{123}$), $t_{ij}$ and $t_{i}$, $i,j=1,2,3$, according to Table \ref{relations} and, after we look for the solution for the following linear system in the variables $x_{12}$, $x_{13}$, $x_{23}$, $x_{1}$, $x_{2}$ and $x_{3}$:

%\begin{equation}\label{eq.linearsystem1}\left\lbrace 
%\begin{array}{l}
%t_{123}+x_{12}=t_{12}\\
%t_{123}+x_{13}=t_{13}\\
%t_{123}+x_{23}=t_{23}\\
%t_{123}+x_{12}+x_{13}+x_1=t_{1}\\
%t_{123}+x_{12}+x_{23}+x_2=t_{2}\\
%t_{123}+x_{13}+x_{23}+x_3=t_{3}\\
%\end{array}
%\right.
%\end{equation}
%
%That is,

\begin{equation}\label{eq.matricialsytem.rank3}
Av^t=w,
\end{equation}
where 
\begin{center}
	\begin{math}
	A=\left[ \begin{array}{cccccc}
	1\ \,&0\ \,&0\ \,&0\ \,&0\ \,&0\ \ \\
	0\,&1\,&0\,&0\,&0\,&0\ \\
	0\,&0\,&1\,&0\,&0\,&0\ \\
	1\,&1\,&0\,&1\,&0\,&0\ \\
	1\,&0\,&1\,&0\,&1\,&0\ \\
	0\,&1\,&1\,&0\,&0\,&1\ \\

	\end{array}\right] \end{math}
\end{center}
%$v=(x_{12}, x_{13}, x_{23}, x_1, x_2, x_3)$ 
and   $w=(t_{12}-t_{123}, t_{13}-t_{123}, t_{23}-t_{123}, t_{1}-t_{123},t_{2}-t_{123},t_{3}-t_{123})$.

If the solution $v=(x_{12}, x_{13}, x_{23}, x_1, x_2, x_3)$ satisfies the conditions in Table \ref{condition-rank3}, we can determine the degree $m$ and the type of $V_{i}^{3}$. In fact, $m=x_{123}+x_{12}+x_{13}+x_{23}+x_{1}+x_{2}+x_{3}$ and the type is a vector determined putting the coordinates of $(t_{123}, x_{12}, x_{13}, x_{23}, x_1, x_2, x_3)$  in ascending order.
 
\begin{table}[ht]
\begin{center}
		\caption{ Conditions to find reduced representations of the code loops of rank 3. \label{condition-rank3}}
	{\begin{tabular}{@{}lll@{}} \hline
			$x_{12}\leq 7$ & $x_{13}\leq 7$& $x_{23}\leq 7$ \\
			$t_{12}> t_{123}$ & $t_{13}> t_{123}$ & $t_{23}>  t_{123}$\\
			$x_{1}\leq 7$ & $x_{2}\leq 7$ & $x_{3}\leq 7$ \\
			$t_{1}\geq x_{12}+x_{13}+t_{123}$  & $t_{2}\geq x_{12}+x_{23}+t_{123}$  & $t_{3}\geq x_{13}+x_{23}+t_{123}$\\			\hline
	\end{tabular}}
\end{center}
\end{table}

The generators of $V_{i}^{3}$ can be obtained in the following way: 
 {\footnotesize \begin{equation*}
v_{1}=X_{123}\cup X_{12}\cup X_{13} \cup X_{1},\ \ 
v_{2}=X_{123}\cup X_{12}\cup X_{23} \cup X_{2},\ \ 
v_{3}=X_{123}\cup X_{13}\cup X_{23} \cup X_{3},
\end{equation*}
}where 
\vspace{.1in}

{\footnotesize \begin{tabular}{@{}l@{}} 
	$X_{123}=\{1,\cdots,t_{123}\},$   \\ $X_{12}=\{t_{123}+1,\cdots,t_{123}+x_{12}\},$  \\
	$X_{13}=\{t_{123}+x_{12}+1,\cdots,t_{123}+x_{12}+x_{13}\},$  \\	
	$X_{23}=\{t_{123}+x_{12}+x_{13}+1,\cdots,t_{123}+x_{12}+x_{13}+x_{23}\},$ \\			
	$X_{1}=\{t_{123}+x_{12}+x_{13}+x_{23}+1,\cdots,t_{123}+x_{12}+x_{13}+x_{23}+x_{1}\},$  \\			
	$X_{2}=\{t_{123}+x_{12}+x_{13}+x_{23}+x_{1}+1,\cdots,t_{123}+x_{12}+x_{13}+x_{23}+x_{1}+x_{2}\},$  \\			
	$X_{3}=\{t_{123}+x_{12}+x_{13}+x_{1}+x_{2}+1,\cdots,t_{123}+x_{12}+x_{13}+x_{1}+x_{1}+x_{2}+x_{3}\}.$  \\			
	
\end{tabular}}
\vspace{.1in}

We used the system for computational discrete algebra GAP (Group, Algorithm and Programming)  to solve the system (\ref{eq.matricialsytem.rank3}) with the conditions given by Table \ref{condition-rank3}. 
%%%Show the results
%\begin{table}[ht]
%	\tbl{xx}
%	{\begin{tabular}{@{}cccc@{}} \toprule
%			$i$ & $u_{i}$  & $v_{i}$ & $w_{i}$  \\ 			 \colrule
%			&          &         &          \\ \botrule
%	\end{tabular}}
%\end{table}

%%%%%%%%%%%%%%%%%%%%%%%%%%%%%END

\subsection{Reduced Representations of the Code Loops of Rank 4}

Let $V$ a $m$-degree reduced representation with generators $v_1,v_2,v_3$ and $v_4$. We denote by $t_i=|v_i|$, $t_{ij}=|v_i \cap v_j|$, $t_{ijk} = |v_i \cap v_i \cap v_k|$ and $t_{1234} = |v_1 \cap v_2 \cap v_3 \cap v_4|$, where $i,j,k=1,2,3,4$. Since $t_{123}\equiv{1\;}(\mbox{mod}\;2)$, $t_{ij4}\equiv{0\;}(\mbox{mod}{\;2})$, $t_{ij}\equiv{0\;}(\mbox{mod}{\;2})$ and $t_{i}\equiv{0\;}(\mbox{mod}{\;4})$ and $V$ is reduced, then $1\leq t_{123}\leq 13$, $0\leq t_{ij4}\leq 14$, $2\leq t_{ij}\leq 28$ $(i,j=1,2,3; i\neq j)$, $0\leq t_{i4}\leq 28$ and $4\leq t_{i}\leq 56$.  We note that the equivalence classes over $I_{m}=\{1,2,\cdots,m\}$ are obtained by calculating the following sets (see Table \ref{t3}).

%\vspace{-1cm}
\begin{table}[ht]
	\begin{center}
		\caption{How to obtain equivalence classes over $I_{m}=\{1,2,\cdots,m\}$ \label{t3}}
	{\begin{tabular}{@{}lll@{}} \hline
			$X_{1234}=v_1\cap v_2 \cap v_3 \cap v_4$ & $X_{123}= (v_1\cap v_2 \cap v_3)\setminus v_4$ & $X_{124}=(v_1\cap v_2 \cap v_4)\setminus v_3$ \\  
			$X_{134}=(v_1\cap v_3 \cap v_4)\setminus v_2$ & $X_{234}=(v_2\cap v_3 \cap v_4)\setminus v_1$ & $X_{12}=(v_1\cap v_2)\setminus (v_3 \cup v_4)$\\ 
			$X_{13}=(v_1\cap v_3)\setminus (v_2 \cup v_4)$& $X_{14}=(v_1\cap v_4)\setminus (v_2 \cup v_3)$& $X_{23}=(v_2\cap v_3)\setminus (v_1 \cup v_4)$\\ 
			$X_{24}=(v_2\cap v_4)\setminus (v_1 \cup v_3)$ & $X_{34}=(v_3\cap v_4)\setminus (v_1 \cup v_2)$ & $X_{1}=v_1\setminus (v_2 \cup v_3 \cup v_4)$ \\ 
			$X_{2}=v_2\setminus (v_1\cup v_3 \cup v_4)$ & $X_{3}=v_3\setminus (v_1 \cup v_2 \cup v_4)$ &  $X_{4}=v_4\setminus (v_1 \cup v_2 \cup v_3)$\\ \hline
	\end{tabular}}
	\end{center}
\end{table}	
%\vspace{-0.8cm}

As done in previous section, we consider the cardinality of this sets denoted by $x_{1234}$, $x_{ijk}$, $x_{ij}$ or $x_{i}$; $i,j,k=1,2,3,4$. Now we want to determine all reduced representations of the code loops of rank 4. For this, choose values to $t_{1234}=x_{1234}$, $t_{ijk}$, $t_{ij}$ and $t_{i}$, with $i,j,k=1,2,3,4$, and solve the following linear system of $14$ equations in the variables $x_{123}$, $x_{124}$, $x_{134}$, $x_{234}$, $x_{12}$, $x_{13}$, $x_{14}$, $x_{23}$, $x_{24}$, $x_{34}$, $x_{1}$, $x_{2}$, $x_{3}$ and $x_{4}$:

\begin{equation}\label{eq.linearsystem1}\left\lbrace 
\begin{array}{l}
t_{1234}+x_{123}=t_{123}\\
t_{1234}+x_{124}=t_{124}\\
t_{1234}+x_{123}+x_{124}+x_{12}=t_{12}\\
t_{1234}+x_{134}=t_{134}\\
t_{1234}+x_{123}+x_{134}+x_{13}=t_{13}\\
t_{1234}+x_{234}=t_{234}\\
t_{1234}+x_{123}+x_{234}+x_{23}=t_{23}\\
t_{1234}+x_{124}+x_{134}+x_{14}=t_{14}\\
t_{1234}+x_{124}+x_{234}+x_{24}=t_{24}\\
t_{1234}+x_{134}+x_{234}+x_{34}=t_{34}\\
t_{1234}+x_{123}+x_{12}+x_{124}+x_{134}+x_{13}+x_{14}+x_1=t_{1}\\
t_{1234}+x_{123}+x_{12}+x_{124}+x_{234}+x_{23}+x_{24}+x_2=t_{2}\\
t_{1234}+x_{123}+x_{13}+x_{134}+x_{234}+x_{23}+x_{34}+x_3=t_{3}\\
t_{1234}+x_{124}+x_{134}+x_{234}+x_{14}+x_{24}+x_{34}+x_4=t_{4}\\	
\end{array}
\right.
\end{equation}

Rewriting this system as a matrix equation, we have

\begin{equation}\label{eq.matricialsytem}
Av^t=w,
\end{equation}
where 
\begin{center}
	\begin{math}
	A=\left[ \begin{array}{cccccccccccccc}
	1\,&0\,&0\,&0\,&0\,&0\,&0\,&0\,&0\,&0\,&0\,&0\,&0\,&0\\
	0&1&0&0&0&0&0&0&0&0&0&0&0&0\\
	
	1&1&0&0&1&0&0&0&0&0&0&0&0&0\\
	0&0&1&0&0&0&0&0&0&0&0&0&0&0\\
	
	1&0&1&0&0&1&0&0&0&0&0&0&0&0\\
	0&0&0&1&0&0&0&0&0&0&0&0&0&0\\
	
	1&0&0&1&0&0&0&1&0&0&0&0&0&0\\
	0&1&1&0&0&0&1&0&0&0&0&0&0&0\\
	
	0&1&0&1&0&0&0&0&1&0&0&0&0&0\\
	0&0&1&1&0&0&0&0&0&1&0&0&0&0\\
	
	1&1&1&0&1&1&1&0&0&0&1&0&0&0\\
	1&1&0&1&1&0&0&1&1&0&0&1&0&0\\
	
	1&0&1&1&0&1&0&1&0&1&0&0&1&0\\
	0&1&1&1&0&0&1&0&1&1&0&0&0&1
	
	\end{array}\right], \end{math}
\end{center}
%$v=(x_{123}, x_{124}, x_{134}, x_{234}, x_{12}, x_{13}, x_{14}, x_{23}, x_{24}, x_{34}, x_1, x_2, x_3, x_4)$
and  $w=(t_{123}-t_{1234},t_{124}-t_{1234},t_{12}-t_{1234},t_{134}-t_{1234},t_{13}-t_{1234},t_{234}-t_{1234},t_{23}-t_{1234},t_{14}-t_{1234},t_{24}-t_{1234},t_{34}-t_{1234},t_{1}-t_{1234},t_{2}-t_{1234},t_{3}-t_{1234},t_{4}-t_{1234})$.

%We use the system for computational discrete algebra GAP (Group, Algorithm and Programming)  to find all the solutions for \ref{eq.matricialsytem} and, consequently, for the original system \ref{eq.linearsystem1}.
Furthermore, the reduced representations will be find for all those solutions that satisfies the conditions:   
%\newpage
\vspace{0.1in}

\begin{table}[ht]
\begin{center}
		\caption{Conditions to find reduced representations of the code loops of rank 4. \label{tb4}}
	{\begin{tabular}{@{}lll@{}} \hline
			$t_{1234}\leq t_{123}$ \;& $x_{123}\geq 0$ & $x_{123}\leq7$ \\ 
			$t_{1234}\leq t_{124}$ & $x_{124}\geq 0$ & $x_{124}\leq7$\\
			$t_{1234}\leq t_{134}$ & $x_{134}\geq 0$ & $x_{134}\leq7$\\
			$t_{1234}\leq t_{234}$  & $x_{234}\geq 0$ & $x_{234}\leq7$\\
			$x_{12}\geq 0$ & $x_{12}\leq 7$ & $t_{12}\geq t_{1234}+x_{123}+x_{124}$\\
			$x_{13}\geq 0$ & $x_{13}\leq 7$ & $t_{13}\geq t_{1234}+x_{123}+x_{134}$\\
			$x_{14}\geq 0$ & $x_{14}\leq 7$ & $t_{14}\geq t_{1234}+x_{124}+x_{134}$\\
			$x_{23}\geq 0$ & $x_{23}\leq 7$ &  $t_{23}\geq t_{1234}+x_{123}+x_{234}$\\
			$x_{24}\geq 0$ & $x_{24}\leq 7$ &  $t_{24}\geq t_{1234}+x_{124}+x_{234}$\\
			$x_{34}\geq 0$ & $x_{34}\leq 7$ &  $t_{34}\geq t_{1234}+x_{134}+x_{234}$\\
			$x_{1}\geq 0$ &  $x_{1}\leq 7$ & $t_{1}\geq x_{12}+x_{13}+x_{14}+x_{123}+x_{124}+x_{134}+t_{1234}$\\
			$x_{2}\geq 0$  &  $x_{2}\leq 7$ & $t_{2}\geq x_{12}+x_{23}+x_{24}+x_{123}+x_{124}+x_{234}+t_{1234}$\\
			$x_{3}\geq 0$ & $x_{3}\leq 7$ & $t_{3}\geq x_{13}+x_{34}+x_{23}+x_{123}+x_{134}+x_{234}+t_{1234}$\\
			$x_{4}\geq 0$ &  $x_{4}\leq 7$ & $t_{4}\geq x_{14}+x_{24}+x_{34}+x_{124}+x_{134}+x_{234}+t_{1234}$\\ \hline
	\end{tabular}}

\end{center}\end{table}

\vspace{-0.4cm}

Thus we need of the following steps to find a reduced representation $V$:

%\vspace{-0.25cm}

\begin{enumerate}
	\item Choose $t=\left[ t_{1234},t_{123}, t_{124}, t_{134}, t_{234}, t_{12}, t_{13}, t_{14}, t_{23}, t_{24}, t_{34}, t_1, t_2, t_3, t_4\right].$
	\item Write $w=(t_{123}-t_{1234},t_{124}-t_{1234},t_{12}-t_{1234},t_{134}-t_{1234},t_{13}-t_{1234},t_{234}-t_{1234},$$t_{23}-t_{1234},t_{14}-t_{1234},t_{24}-t_{1234},t_{34}-t_{1234},t_{1}-t_{1234},t_{2}-t_{1234},t_{3}-t_{1234},t_{4}-t_{1234})$ and verify if there is a solution $v=(x_{123}, x_{124}, x_{134}, x_{234},$ $x_{12}, x_{13}, x_{14}, x_{23}, x_{24}, x_{34}, x_1, x_2, x_3, x_4)$ for the system $Av^{t}=w$ that satisfies the  conditions given by Table \ref{tb4}. If so, continue to next step.
	\item Find the sets $X_{1234},X_{123}$, $X_{124}$, $X_{134}$, $X_{234}$, $X_{12}$, $X_{13}$, $X_{14}$, $X_{23}$, $X_{24}$, $X_{34}$, $X_{1}$, $X_{2}$, $X_{3}$ and $X_{4}$ using the rules defined in Table \ref{tb6}. Note that the non-empty sets are the equivalence classes over $I_{m}$, where $m$ is the degree of $V$.

	\begin{table}[ht]
		\begin{center}
			\caption{Calculation of the equivalence classes over $I_{m}$. \label{tb6}} 
		{\begin{tabular}{|l|l|l|} \hline
				$X_{1234}$&$\left\lbrace \;\right\rbrace$ \;\mbox{se}\; $x_{1234}=0$ &  \\ 
				&$\left\lbrace 1,\dots,x_{1234}\right\rbrace$ \;\mbox{se}\; $x_{1234}\neq 0$&  \\ \hline
				$X_{123}$ & $\left\lbrace x_{1234}+1,\dots,n_{123}  \right\rbrace$ &  $n_{123}=x_{1234}+x_{123}$ \\ \hline
				$X_{124}$ & $\left\lbrace \; \right\rbrace$ \;\mbox{se}\; $x_{124}=0$ & $n_{124}=n_{123}+x_{124}$\\ 
				&$\left\lbrace n_{123}+1, \dots, n_{124} \right\rbrace$ \;\mbox{se}\; $x_{124}\neq 0$&\\ \hline
				$X_{134}$ & $\left\lbrace \; \right\rbrace$ \;\mbox{se}\; $x_{134}=0$ & $n_{134}=n_{124}+x_{134}$\\ 
				&$\left\lbrace n_{124}+1, \dots, n_{134} \right\rbrace$ \;\mbox{se}\; $x_{134}\neq 0$ &\\ \hline
				$X_{12}$ & $\left\lbrace n_{134}+1, \dots, n_{12} \right\rbrace$ & $n_{12}=n_{134}+x_{12}$\\ \hline
				$X_{13}$ & $\left\lbrace n_{12}+1, \dots, n_{13} \right\rbrace$ & $n_{13}=n_{12}+x_{13}$\\ \hline
				$X_{14}$ & $\left\lbrace \; \right\rbrace$ \;\mbox{se}\; $x_{14}=0$ & $n_{14}=n_{13}+x_{14}$\\ 
				& $\left\lbrace n_{13}+1, \dots, n_{14} \right\rbrace$ \;\mbox{se}\; $x_{14}\neq 0$ & \\ \hline
				$X_{1}$& $\left\lbrace n_{14}+1, \dots, n_1 \right\rbrace$ & $n_1 = n_{14}+x_1$\\ \hline
				$X_{234}$ & $\left\lbrace \; \right\rbrace$ \;\mbox{se}\; $x_{234}=0$  & $n_{234}=n_1 + x_{234}$\\ 
				&$\left\lbrace n_{1}+1, \dots, n_{234} \right\rbrace$ \;\mbox{se}\; $x_{234}\neq 0$ & \\ \hline
				$X_{23}$& $\left\lbrace n_{234}+1, \dots, n_{23} \right\rbrace$ & $n_{23}=n_{234} + x_{23}$ \\ \hline
				$X_{24}$ & $\left\lbrace \; \right\rbrace$ \;\mbox{se}\; $x_{24}= 0$ & $n_{24}=n_{23} + x_{24}$\\ 
				&$\left\lbrace n_{23}+1, \dots, n_{24} \right\rbrace$ \;\mbox{se}\; $x_{24}\neq 0$ &\\ \hline
				$X_{2}$&$\left\lbrace n_{24}+1, \dots, n_{2} \right\rbrace$ & $n_{2}=n_{24}+x_{2}$\\ \hline
				$X_{34}$&$\left\lbrace \; \right\rbrace$ \;\mbox{se}\; $x_{34}= 0$ & $n_{34}=n_{2} + x_{34}$ \\ 
				&$\left\lbrace n_{2}+1, \dots, n_{34} \right\rbrace$ \;\mbox{se}\; $x_{34}\neq 0$ & \\ \hline
				$X_{3}$ &$\left\lbrace n_{34}+1, \dots, n_{3} \right\rbrace$ & $n_{3}=n_{34} + x_{3}$\\ \hline
				$X_{4}$ & $\left\lbrace \; \right\rbrace$ \;\mbox{se}\; $x_{4}= 0$ & $n_{4}=n_{3} + x_{4}$\\ 
				&$\left\lbrace n_{3}+1, \dots, n_{4} \right\rbrace$ \;\mbox{se}\; $x_{4}\neq 0$ &\\ \hline
		\end{tabular}}

	\end{center}	\end{table}
	
	\item Find generators $v_1, v_2, v_3$ and $v_4$ of $V$ writing:
	\vspace{-0.1cm}
	
	\begin{eqnarray*}
		v_1 &=& X_{1234}\cup X_{123}\cup X_{124}\cup X_{134}\cup X_{12} \cup X_{13} \cup X_{14} \cup X_1,\\
		v_2 &=& X_{1234}\cup X_{123}\cup X_{124}\cup X_{234}\cup X_{12} \cup X_{23} \cup X_{24} \cup X_2,\\
		v_3 &=& X_{1234}\cup X_{123}\cup X_{134}\cup X_{234}\cup X_{13} \cup X_{23} \cup X_{34} \cup X_3,\\
		v_4 &=& X_{1234}\cup X_{124}\cup X_{134}\cup X_{234}\cup X_{14} \cup X_{24} \cup X_{34} \cup X_4.
	\end{eqnarray*}
	
\end{enumerate}

As an illustration, we will present a reduced representation, denoted by $V_{16}^1$, for the code loop with characteristic vector $(0001111100)$.

If we consider  $t=\left[ 0,1,0,2,0,2,6,2,6,0,4,8,8,16,4\right]$, then we obtain $w=\left(1,0,2,2,6,0,6,2,0,4,6,8,12,4 \right) $  such that  $v=(1,0,2,0,1,3,0,5,0,2,1,1,3,0)$ is the solution of the system (\ref{eq.matricialsytem}). Then the generators of $V_{16}^1$ are given by:
\begin{eqnarray*}
	v_1&=&\left\lbrace 1,2,3,4,5,6,7,8 \right\rbrace , \\
	v_2&=&\left\lbrace 1,4,9,10,11,12,13,14 \right\rbrace,\\
	v_3&=&\left\lbrace 1,2,3,5,6,7,9,10,11,12,13,15,16,17,18,19 \right\rbrace ,\\
	v_4&=&\left\lbrace 2,3,15,16\right\rbrace .
\end{eqnarray*}

%Comparation: two code loops of same degree and different types
Note that the type of $V^{1}_{16}$ is $(111122335)$. Now let's consider another representation of the same code loop above with the same degree of $V^{1}_{16}$ but with different type. Consider the representation, named as $V^{2}_{16}$, generated by the vectors:
\begin{eqnarray*}
	v_1&=&\left\lbrace 1,2,3,4,5,6,7,8 \right\rbrace , \\
	v_2&=&\left\lbrace 1,2,3,4,5,6,9,10,11,12,13,14,15,16,17,18 \right\rbrace,\\
	v_3&=&\left\lbrace 1,2,3,4,5,7,9,10,11,12,13,14,15,16,17,19 \right\rbrace ,\\
	v_4&=&\left\lbrace 1,2,9,10\right\rbrace .
\end{eqnarray*}

In Table \ref{tb7} we present the weights of the generators of each representation and the weights of all intersections of these generators. We denote $t_{1234}$ by $t$.

%Observe that with some calculations we obtain the weights of all the sixteen elements that belongs to $V^{1}_{16}$ and $V^{2}_{16}$. 

\begin{table}[hh!]
	\begin{center}
		\caption{Comparation between $V_{16}^{1}$ and $V_{16}^{2}$. \label{tb7}}
	{\begin{tabular}{c|ccccccccccccccc} 
			& $t$ & $t_{123}$ & $t_{124}$ & $t_{134}$ & $t_{234}$ & $t_{12}$ & $t_{13}$ & $t_{14}$ & $t_{23}$ & $t_{24}$ & $t_{34}$ & $t_{1}$ & $t_{2}$ & $t_{3}$ & $t_{4}$\\ \hline
			$V^{1}_{16}$ &0&1&0&2&0&2&6&2&6&0&4&8&8&16&4\\ 
			$V^{2}_{16}$ &2&5&2&2&4&6&6&2&14&4&4&8&16&16&4\\ 
	\end{tabular}}
	\end{center}
\end{table}
The sets of weights of the elements of the representations $V_{16}^{1}$ and $V_{16}^{2}$, are given, respectively, by $ \left\lbrace 0, 4, 8, 8, 8, 8, 8, 8, 12, 12, 12, 12, 12, 12, 12, 16 \right\rbrace $ and $\left\lbrace  0, 4, 4, 8, 8, 8, 8, 8, 12, 12, 12, 12, 12, 12, 16, 16\right\rbrace $. Since, for instance, there is a unique element of weight $4$ in $V_{16}^{1}$ while in $V_{16}^{2}$ there are $2$ elements of order $4$, we see that these representations are not isomorphic. 

%%%%%%%%%%%%%%%%%
\section{Minimal Representations}

For the next theorems, note that $\left\langle v_{1},v_{2},\cdots, v_{n}\right\rangle$ means the vector subspace of space $\mathbb{F}_{2}^{m}$ generated by $v_{1},v_{2},\cdots,v_{n}$.

% and, \textbf{the last} coordinate of the last generator of this subspace corresponds to the degree of the representation. For instance, if $m$ is igual to 7, then $(1,2,3,4)$ is equal to  $(1,1,1,1,0,0,0)$.

\begin{theorem}\label{th3.4}
	The code loops $C_{1}^{3},\dots,C_{5}^{3}$ have the following minimal representations $V_{1},\dots,V_{5}$, which are given by\\
	
	\noindent
	$V_{1}=\left\langle (1,2,3,4),(1,2,5,6),(1,3,5,7)\right\rangle,$\\
	$V_{2}=\left\langle (1,2,3,4,5,6,7,8),(1,2,3,4,9,10,11,12),(1,5,6,7,9,10,11,13)\right\rangle,$\\
	$V_{3}=\left\langle (1,2,3,4,5,6,7,8),(1,2,3,4,5,6,9,10),(1,2,3,4,5,7,9,11)\right\rangle,$\\
	$V_{4}=\left\langle (1,2,3,4),(1,2,5-14),(1,3,5,6,7,8,9,10,11,15,16,17)\right\rangle,$\\
	$V_{5}=\left\langle (1-12),(1-8,13,14,15,16),(1,2,3,4,5,9,10,11,13,14,15,17)\right\rangle.$
\end{theorem}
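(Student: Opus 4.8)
The plan is to establish two things for each index $i$: first that $V_i$ is genuinely a representation of $C_i^3$, i.e.\ that $V_i$ is doubly even and $L(V_i)\simeq C_i^3$; and second that no representation of $C_i^3$ has smaller degree than $V_i$.

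For the first part I would argue concretely. Writing $v_1,v_2,v_3$ for the three displayed generators, I would list the seven nonzero codewords $v_1,v_2,v_3,v_1+v_2,v_1+v_3,v_2+v_3,v_1+v_2+v_3$ and compute their weights from $|v+w|=|v|+|w|-2|v\cap w|$ and $|v_1+v_2+v_3|=\sum|v_j|-2\sum|v_j\cap v_k|+4|v_1\cap v_2\cap v_3|$; that all seven are divisible by $4$ shows $V_i$ is doubly even. I would then read off the characteristic vector of $L(V_i)$ for the basis $\{v_1,v_2,v_3\}$ using the relations $(1)$: $v_j^2=(-1)^{|v_j|/4}$, $[v_j,v_k]=(-1)^{|v_j\cap v_k|/2}$, and associator $(-1)^{|v_1\cap v_2\cap v_3|}$. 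For $i=1,2,3$ this vector equals $\lambda(C_i^3)$ on the nose, so Theorem \ref{th2.1} gives $L(V_i)\simeq C_i^3$ at once. For $i=4,5$ the computed vector is $(111110)$ resp.\ $(111000)$, which is isomorphic but not equal to the canonical one; here I would identify the loop by the isomorphism invariants ``number of elements whose square is $-1$'' (equivalently the number of nonzero codewords of weight $\equiv 4\pmod 8$) together with the number of noncommuting generator pairs, which separate the five classes of Theorem \ref{th2.1} and single out $C_4^3$ resp.\ $C_5^3$; alternatively one exhibits an explicit change of generators sending $(111110),(111000)$ to $(110000),(100000)$.

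For minimality I would first reduce the degree to a combinatorial quantity. The degree of any representation $V$ is at least $|\mathrm{supp}(V)|=|w_1\cup w_2\cup w_3|$ for any basis $\{w_1,w_2,w_3\}$ of $V$. Given a representation of $C_i^3$, I would pull the canonical generators $a,b,c$ of $C_i^3$ back along an isomorphism $C_i^3\to L(V)$; since the square, commutator and associator of $(\pm1,w)$ depend only on $w$, the underlying codewords $w_1,w_2,w_3$ form a basis of $V$ realizing \emph{exactly} $\lambda(C_i^3)$, so their weights $t_j$, pairwise intersections $t_{jk}$ and triple intersection $t_{123}$ obey the congruences of Table \ref{relations}. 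Inclusion--exclusion then gives $\deg V=t_1+t_2+t_3-(t_{12}+t_{13}+t_{23})+t_{123}$, and I would minimize this subject to those congruences, to nonnegativity of the seven region sizes $x_{123}=t_{123}$, $x_{jk}=t_{jk}-t_{123}$, $x_j=t_j-t_{jk}-t_{jl}+t_{123}$, and to the no-containment facts already recorded in the text ($t_{123}$ odd, so $w_j\not\subset w_k$ and $w_j\cap w_k\neq\emptyset$; hence $t_{jk}>t_{123}$, and an edge meeting a weight-$8$ generator forces $t_{jk}=4$ since then $0<t_{jk}<8$). Carrying out this finite optimization gives the minima $7,13,11,17,17$, each attained by the corresponding $V_i$.

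The main obstacle is this lower-bound optimization, concentrated in the cases $C_4^3$ and $C_5^3$. Two structural facts drive the bound there. First, a generator whose square is $-1$ cannot have weight $4$: if $t_1=4$ then $x_1\ge 0$ forces $t_{12}+t_{13}\le 4+t_{123}$, incompatible with $t_{12},t_{13}$ being positive multiples of $4$ that are at least $t_{123}$, so every such generator has weight at least $12$. Second, the no-containment facts force every intersection on an edge meeting a weight-$8$ generator to equal $4$. Together these collapse the formula to $\deg V=16+t_{123}\ge 17$ at the minimal weights, realized at $t_{123}=1$. The residual difficulty, which requires genuine care rather than substitution, is verifying that raising any weight above its minimal admissible value cannot decrease the degree, since a larger $t_j$ relaxes the nonnegativity constraints and permits larger intersections $t_{jk}$ that pull the degree back down; a convenient cross-check is the parity observation that $\sum t_j$ is even and $\sum t_{jk}\equiv 0\pmod 4$ while $t_{123}$ is odd, forcing $\deg V$ odd for these two loops and independently excluding the even value $16$.
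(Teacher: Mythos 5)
Your proposal is correct, and it fills in steps that the paper leaves implicit: the paper states Theorem \ref{th3.4} without a written proof, relying on the machinery of Section \ref{sec3.1} (the seven-region decomposition $X_{123},X_{ij},X_i$, the linear system (\ref{eq.matricialsytem.rank3}), the conditions of Table \ref{condition-rank3}) together with a GAP enumeration. Your decomposition is the same one, but you replace the computer search by a hand optimization of $\deg V=t_1+t_2+t_3-\sum t_{jk}+t_{123}$ driven by two structural lemmas (a generator with $t_j=4$ is incompatible with $t_{jk},t_{jl}\equiv 0\pmod 4$, so squares $-1$ with trivial commutators force $t_j\ge 12$; a weight-$8$ generator forces its incident intersections to equal $4$), plus the parity observation that $\deg V\equiv t_{123}\equiv 1\pmod 2$. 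This buys two things the paper's route does not make explicit: (i) you justify that any representation admits a basis realizing exactly $\lambda(C_i^3)$ by pulling back the canonical generators, which is what licenses the congruences of Table \ref{relations}; and (ii) your optimization ranges over all representations, not only reduced ones, whereas a search bounded by the reduced conditions $x\le 7$ only yields the minimum within that subclass. You also correctly notice that the listed bases of $V_4$ and $V_5$ realize $(111110)$ and $(111000)$ rather than the canonical vectors, a point the paper passes over. One caution there: the invariant you propose, ``number of noncommuting generator pairs,'' is basis-dependent as stated; the count of elements squaring to $-1$ (namely $7,1,3,5,3$ for $C_1^3,\dots,C_5^3$) distinguishes $V_4$ but leaves the tie $C_3^3$ versus $C_5^3$ for $V_5$, so for that case you should lean on your alternative of an explicit change of generators (e.g.\ $\{v_1,\,v_1+v_2,\,v_1+v_3\}$ carries $(111000)$ to $(100000)$), which is airtight. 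With that substitution the argument is complete; the remaining finite case checks in the optimization (e.g.\ that enlarging $t_3$ to $16$ for $C_4^3$ never drops the degree below $17$) do go through as you anticipate.
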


\begin{corollary}
	Each minimal representation of the code loops $C_{1}^{3},\dots,C_{5}^{3}$ has the following types, respectively:
	$$(1111111),(1111333),(1111115),(1111337),(1113335).$$
\end{corollary}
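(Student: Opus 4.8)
The plan is to prove, for each $i$, two separate things: that $V_i$ is a representation of $C_i^3$ (that is, $L(V_i)\simeq C_i^3$), and that no representation of $C_i^3$ has degree smaller than $\deg V_i$. The first is a direct computation. Writing $v_1,v_2,v_3$ for the three listed generators of $V_i$, I would compute the weights $t_j=|v_j|$, the pairwise intersections $t_{jk}=|v_j\cap v_k|$ and the triple intersection $t_{123}=|v_1\cap v_2\cap v_3|$, and then read off the characteristic vector from the identities recalled in the Introduction: $v_j^2=(-1)^{t_j/4}$ determines $\lambda_j$ from $t_j\bmod 8$, $[v_j,v_k]=(-1)^{t_{jk}/2}$ determines $\lambda_{jk}$ from $t_{jk}\bmod 4$, and the associator is $-1$ exactly when $t_{123}$ is odd. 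For instance, for $V_1$ one finds $t_1=t_2=t_3=4$, $t_{12}=t_{13}=t_{23}=2$ and $t_{123}=1$, so $\lambda(L(V_1))=(111111)=\lambda(C_1^3)$. Having matched the computed vector with $\lambda(C_i^3)$ (after reindexing the generators when necessary, as happens for $V_4$), Theorem~\ref{th2.1} identifies $L(V_i)$ with $C_i^3$, since the five listed vectors are pairwise non-isomorphic and exhaust the nonassociative rank-$3$ code loops.

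For minimality I would first reduce the problem to a finite optimization. Any representation $V$ is spanned by three vectors $v_1,v_2,v_3$; coordinates lying in no $v_j$ contribute nothing to the loop and may be deleted, lowering the degree, so in a minimal $V$ the support splits into the seven classes $X_{123},X_{12},X_{13},X_{23},X_1,X_2,X_3$ of Section~\ref{sec3.1} and $\deg V=x_{123}+x_{12}+x_{13}+x_{23}+x_1+x_2+x_3$. Moreover, deleting eight coordinates from a single class changes every $t_j$ by $0$ or $8$ and every intersection by $0$ or $8$, hence preserves $\lambda(L(V))$ while lowering the degree; so a minimal representation may be assumed reduced, every class having size $<8$. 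Thus $\deg V$ is the minimum of $x_{123}+\sum x_{jk}+\sum x_j$ over nonnegative integers $x_\bullet<8$ subject to the congruences prescribed by $\lambda(C_i^3)$, namely $x_{123}$ odd, $t_{jk}=x_{123}+x_{jk}\equiv 2\lambda_{jk}\pmod 4$, and $t_j=x_{123}+x_{jk}+x_{jl}+x_j\equiv 4\lambda_j\pmod 8$, together with the admissibility conditions of Table~\ref{condition-rank3}.

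The minimality of each $V_i$ then follows by solving this integer program. The essential point is that $x_{123}$ may take any of the four values $1,3,5,7$, and the optimum need not occur at $x_{123}=1$: for each fixed $x_{123}$ the smallest admissible $x_{jk}$ is forced by the mod-$4$ congruence and then the smallest admissible $x_j$ by the mod-$8$ congruence, giving one candidate degree per value, and one minimizes over the four cases. Carrying this out yields minimal degrees $7,13,11,17,17$ for $C_1^3,\dots,C_5^3$; for example the minimum for $C_3^3$ is $11$, attained at $x_{123}=5$ rather than at $x_{123}=1$ (which would force $x_1=x_2=x_3=5$ and degree $19$). Finally I would check that the class sizes of the listed $V_i$ realize an optimal solution of the corresponding program, so that $\deg V_i$ equals the minimum; this search is exactly what the authors automate with GAP on the system~\eqref{eq.matricialsytem.rank3}.

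The main obstacle is the minimality bound, not the identification of the loops. Two coupled features make the lower bound delicate: the weight $t_j$ depends on the chosen $x_{jk}$, so the congruences for the $x_{jk}$ and for the $x_j$ cannot be optimized independently; and the optimum genuinely ranges over all four odd values of $x_{123}$. One must also verify that the optimal solution satisfies the admissibility inequalities (such as $t_{jk}>t_{123}$ and $t_j\geq x_{jk}+x_{jl}+t_{123}$) so that it corresponds to an actual doubly even code, and confirm that the block-removal step preserves the isomorphism type of the loop in each case.
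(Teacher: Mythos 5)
Your strategy is essentially the paper's own: Section~\ref{sec3.1} sets up exactly the integer program you describe (the system~\eqref{eq.matricialsytem.rank3} together with the admissibility conditions of Table~\ref{condition-rank3}, solved by GAP), and the corollary itself is obtained in the paper by simply reading off the seven class sizes of the representations listed in Theorem~\ref{th3.4}. Your added justification that a minimal representation may be assumed reduced --- deleting coordinates outside the support of $V$, and deleting eight coordinates from an oversized class, both preserve the characteristic vector and hence the loop --- is a worthwhile supplement to something the paper only imposes by fiat.

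However, one step of your minimality argument is wrong as described. You claim that for each fixed $x_{123}$ one takes the smallest $x_{jk}$ allowed by the mod-$4$ congruence and then the smallest $x_j$ allowed by the mod-$8$ congruence, ``giving one candidate degree per value'' of $x_{123}$. This greedy rule fails for $C_4^3$: with $\lambda=(110000)$ and $x_{123}=1$ it forces $x_{12}=x_{13}=x_{23}=3$ and then $x_1=x_2=5$, $x_3=1$, for degree $21$, whereas the true minimum $17$ (with type $(1111337)$) is attained only by enlarging one intersection class, e.g.\ $x_{12}=7$, $x_{13}=x_{23}=3$, $x_1=x_2=x_3=1$. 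The reason is that increasing one $x_{jk}$ by $4$ can lower two of the $x_j$ by $4$ each, so the $x_{jk}$ and $x_j$ must be optimized jointly; you do acknowledge this coupling at the end, but the stated procedure contradicts it and would output the wrong minimal degree for $C_4^3$. The fix is to run the full search over all admissible tuples $(x_{123},x_{12},x_{13},x_{23})$, which is precisely what the GAP computation does. Finally, the corollary asserts the type of \emph{each} minimal representation; to obtain this, rather than the type of one optimal solution, one must also check that every degree-optimal solution of the program yields the same type --- a point that neither your sketch nor the paper makes explicit.
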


In order to present the Representation Theorem for nonassociative code loops of rank $4$, note that, according to Theorem \ref{th2.2}, we have exactly $16$ code loops of rank $4$, namely, $C_{1}^{4},C_{2}^{4},\dots,C_{16}^{4}.$ For each $C_{i}^{4}$, $i=1,\dots,16,$ we have to find $V_{i} \subseteq {{\bf F}_{2}^{m}}$ doubly even code of minimal degree $m$ such that $V_{i} \cong L(C_{i}^{4}).$

In general, the set $X=\left\{a,b,c,d\right\}$ will denote a set of generators of $C_{i}^{4}$ and  $V_{i}=\left\langle v_{1},v_{2},v_{3},v_{4} \right\rangle $ a minimal representation of $C_{i}^{4}$, where $v_{1},v_{2},v_{3},v_{4}$ are vectors that correspond to $a,b,c,d$ respectively.

For the next theorem we use the notation: $t_{ijk}=|v_{i}\cap v_{j}\cap v_{k}|, i,j,k = 1,..,4$ and $t_{1234}=|v_{1}\cap v_{2}\cap v_{3}\cap v_{4}|.$

\begin{theorem}\label{theorem3.6}
	Each code loop $C_{1}^{4},\dots,C_{16}^{4}$ has the following set of generators to its minimal representation $V_{1},\dots,V_{16}$, respectively:\\
	
	\noindent
	$V_{1}=\left\langle (1,2,3,4),(1,2,5,6),(1,3,5,7),(1-8)\right\rangle,$\\
	$V_{2}=\left\langle (1-8),(1-4,9-12),(1,5,6,7,9,10,11,13),(1,2,5,8,9,12,13,14)\right\rangle,$\\
	$V_{3}=\left\langle (1-8),(1-6,9,10),(1,2,3,4,5,7,9,11),(1,6-12)\right\rangle,$\\
	$V_{4}=\left\langle (1-8),(1-6,9,10),(1,2,3,7,9,11-17),(1,4,7,8,9,10,11,18)\right\rangle,$\\
	$V_{5}=\left\langle (1-8),(1,2,3,4,9,10,11,12),(1,5,9,13-17),(1,2,5,6,9,10,13,18)\right\rangle,$\\
	$V_{6}=\left\langle (1,2,3,4),(1,2,5,6),(1,3,5,7),(8,9,10,11)\right\rangle,$\\
	$V_{7}=\left\langle (1-8),(1,2,3,4,9,10,11,12),(1,5,6,7,9,10,11,13),(14,15,16,17)\right\rangle,$\\
	$V_{8}=\left\langle (1-8),(1,2,3,4,9-12),(1,2,3,5,9,13,14,15),(1,2,10,11,13,14,16,17)\right\rangle,$\\
	$V_{9}=\left\langle (1-8),(1,2,3,4,9-16),(1,5,6,7,9,10,11,17),(5,6,9,10,12,13,18,19)\right\rangle,$\\
	$V_{10}=\left\langle (1-8),(1,2,9-14),(1,3,9,10,11,15,16,17),(4,5,18,19)\right\rangle,$\\
	$V_{11}=\left\langle (1-8),(1,2,3,4,9-12),(1,2,3,5,9,13,14,15),(6,7,16,17)\right\rangle,$\\
	$V_{12}=\left\langle (1-8),(1,2,3,4,9-12),(1,2,3,5,9,10,11,13),(1,2,9,10,14,15,16,17)\right\rangle,$\\
	$V_{13}=\left\langle (1-8),(1,2,9,10),(1,3,9,11),(4,5,12,13,14,15,16,17)\right\rangle,$\\
	$V_{14}=\left\langle (1-8),(1,2,3,4,9-12),(1,2,3,5,9,10,11,13),(1,2,9,10)\right\rangle,$\\
	$V_{15}=\left\langle (1-12),(1,2,3,4,13-16),(1,2,3,5,13,14,15,17),(1,2,13,14)\right\rangle,$\\
	$V_{16}=\left\langle (1-8),(1,2,9,10,11,12,13,14),(1,3,9,10,11,12,13,15),(4,5,16,17)\right\rangle.$
\end{theorem}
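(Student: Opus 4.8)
The plan is to argue in two stages: first that each displayed $V_{i}$ is a representation of $C_{i}^{4}$, and then that its degree $m_{i}$ is as small as possible. For the first stage I would check that each $V_{i}$ is doubly even: using $|u+v|=|u|+|v|-2|u\cap v|$, it suffices that each of the four generators has weight $\equiv 0\pmod 4$ and that every pairwise intersection is even, since then an induction on the number of generators involved shows every codeword has weight $\equiv 0\pmod 4$ (the parity of $|(u+v)\cap w|$ is preserved along the way). Once $V_{i}$ is known to be a legitimate code, the loop invariants are read straight off the combinatorics: $v^{2}=(-1)^{|v|/4}$ determines each $\lambda_{j}$ from $|v_{j}|\bmod 8$, $[u,v]=(-1)^{|u\cap v|/2}$ determines each commutator bit from $|v_{i}\cap v_{j}|\bmod 4$, and $(u,v,w)=(-1)^{|u\cap v\cap w|}$ determines each associator bit from $|v_{i}\cap v_{j}\cap v_{k}|\bmod 2$. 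Computing these for the four generators, with $v_{4}$ in the role of the nuclear generator $d$ (so that only $|v_{1}\cap v_{2}\cap v_{3}|$ is odd and every triple intersection meeting $v_{4}$ is even), and matching the resulting vector against the table of Theorem \ref{th2.2}, identifies $L(V_{i})\cong C_{i}^{4}$; this matching is conclusive because Theorem \ref{th2.2} states that the sixteen loops are pairwise non-isomorphic and exhaust all cases.

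For minimality I would first record the constraints that the characteristic vector forces on \emph{any} representation: a generator with square $+1$ is a nonzero codeword of weight $\equiv 0\pmod 8$, hence has weight $\geq 8$; a generator with square $-1$ has weight $\geq 4$; each nontrivial commutator forces the corresponding pairwise intersection to be $\geq 2$; and the sole nontrivial associator forces $|v_{1}\cap v_{2}\cap v_{3}|\geq 1$. Since the degree of any representation is at least $|v_{1}\cup v_{2}\cup v_{3}\cup v_{4}|$, these constraints already settle several loops. For $C_{1}^{4}$ the weight-$8$ generator $v_{4}$ alone gives $m\geq 8=m_{1}$; for $C_{6}^{4}$, each number $|v_{i}\cap v_{4}|$ is forced into $\{0,4\}$ by the trivial commutators with $d$, so a weight-$4$ generator $v_{4}$ cannot meet another weight-$4$ generator without coinciding with it, whence $v_{4}$ is disjoint from $v_{1}\cup v_{2}\cup v_{3}$ and (the latter union being forced to have size $12-6+1=7$) one gets $m\geq 7+4=11=m_{6}$. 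In each such case the displayed $V_{i}$ attains the bound.

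For the remaining loops the minimum is not pinned down by a single generator and must come from a genuine optimization of the support size. Here I would first observe that minimal representations may be taken reduced: any equivalence class whose size is a multiple of $8$ can be deleted, which alters all weights, pairwise intersections and triple intersections only by multiples of $8$, $4$ and $2$ respectively, hence preserves the loop while strictly lowering the degree. Within the reduced framework of Section 3 the degree decomposes as $m=x_{1234}+\sum x_{ijk}+\sum x_{ij}+\sum x_{i}$, subject to the linear system (\ref{eq.matricialsytem}) and the bounds of Table \ref{tb4}; minimizing $m$ over all weight vectors $t$ compatible with $\lambda(C_{i}^{4})$ is then a finite integer program, since reducedness together with the modular conditions bounds every entry of $t$. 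Carrying out this finite search (the GAP computation described in Section 3) returns the minimal degree, and I would verify that each displayed $V_{i}$ realizes exactly that optimum.

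The main obstacle is precisely this last optimization. Exhibiting the representations is routine, and the weight inequalities give matching upper and lower bounds for the ``easy'' loops; but for loops such as $C_{2}^{4}$ --- where four weight-$8$ generators with an odd triple intersection and pairwise intersections $\equiv 0\pmod 4$ must be packed into as few coordinates as possible --- ruling out every cleverer overlap pattern that might beat $m_{i}$ requires analysing all admissible configurations. Establishing that the finite search is complete, namely that no admissible $t$ outside the enumerated ranges can help and that restricting to reduced representations loses no minimal one, is the delicate point on which the minimality claim ultimately rests.
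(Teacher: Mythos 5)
Your two-stage plan is sound, and the first stage (verify double evenness, read off the characteristic vector from $|v_j|\bmod 8$, $|v_i\cap v_j|\bmod 4$, $|v_i\cap v_j\cap v_k|\bmod 2$, and match against Theorem~\ref{th2.2}) is exactly what the paper does. For minimality, however, your route differs from the paper's: the paper argues by direct hand enumeration of generator configurations --- it normalizes $v_1$ as a block of the forced weight, then walks through the admissible values of $t_{123}$, $t_{1234}$ and $t_{ij4}$ dictated by the congruences, showing every branch other than the displayed one inflates the degree; it carries this out explicitly only for $C_{1}^{4}$, $C_{7}^{4}$ and $C_{10}^{4}$ and disposes of the remaining thirteen loops with ``analogously.'' You instead propose closed-form lower bounds for the easy loops (your $C_{1}^{4}$ bound coincides with the paper's; your $C_{6}^{4}$ argument is a clean shortcut the paper does not state) and, for the rest, a finite integer program over the parameters $x_{1234},x_{ijk},x_{ij},x_{i}$ of system~(\ref{eq.matricialsytem}) subject to Table~\ref{tb4}. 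This is a more systematic packaging of the same underlying combinatorics, and it buys one thing the paper silently assumes: your observation that a minimal representation must be reduced. (State it correctly, though: the right move is to delete \emph{eight elements} from any equivalence class of size at least $8$ --- which changes all weights and intersections by $0$ or $8$ and so preserves the loop --- rather than to delete whole classes whose size happens to be a multiple of $8$; a class of size $9$ is not covered by your phrasing.) The honest caveat applies symmetrically: you leave the finite search uncarried-out for most loops, but so does the paper, so your proposal is no less complete than the published argument and is better justified on the reduction step.
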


%%%%%%%%%%%%%%%DEMONSTRATION THEOREM CODE LOOP OF RANK 4
\begin{proof}
	Each $V_{i}$, $i=1,\dots,16$, is clearly a doubly even binary code. Now, to prove that $V_{i}\simeq L(C_{i}^{4})$ we just need to find the characteristic vector associated to $L(C_{i}^{4})$ and apply the Theorem \ref{th2.2} (Classification of code loop of rank 4). Therefore, $V_{i}$ is a representation of  $C_{i}^{4}$, $i=1,\dots,16$.
	
	Now, we are going to prove, up to isomorphism, that $V_{1}$ is the unique minimal representation of $C_{1}^{4}$. We consider $X=\{a,b,c,d\}$ a set of generators of $C_{1}^{4}$ such that $\lambda=\lambda(C_{1}^{4})=(1110110100)$. We suppose that $V=\left\langle v_{1},v_{2},v_{3},v_{4}\right\rangle $ is a minimal representation of $C_{1}^{4}$, where $v_{1},v_{2},v_{3},v_{4}$ corresponds to $a,b,c,d$, respectively. Hence, $\mbox{deg} \;V \leq 8$. In this case, we have 
	\begin{eqnarray*}
		|v_{1}|&\equiv & |v_{2}|\equiv |v_{3}|\equiv 4\;(\mbox{mod}\;8)\\
		|v_{4}|&\equiv & 0\;(\mbox{mod}\;8)\\
		|v_{1}\cap v_{2}|&\equiv & |v_{1}\cap v_{3}|\equiv |v_{2}\cap v_{3}|\equiv 2 \;(\mbox{mod}\;4)\\
		|v_{1}\cap v_{4}|&\equiv & |v_{2}\cap v_{4}|\equiv |v_{3}\cap v_{4}|\equiv 0 \;(\mbox{mod}\;4)
	\end{eqnarray*}
	
	Suppose $v_{1}=(1,2,3,4)$, then $|v_{1}\cap v_{2}| = |v_{1}\cap v_{3}|=2$ and hence, $t_{123}=1$. Then we can assume $v_{2}=(1,2,5,6)$ and $v_{3}=(1,3,5,7)$.
	
	We will analyse two possible cases for values of $t_{1234}$: $0$ and $1$.
	
	%We will write whenever necessary $t_{ij4}$, for $|v_{i}\cap v_{j}\cap v_{4}|$ with  $i,j=1,2,3$, $i\neq j$ and $t_{1234}$ for $|v_{1}\cap v_{2}\cap v_{3}\cap v_{4}|$.
	
	Case $t_{1234}=0$, we have $t_{ij4}=0$ and thus, $|v_{i}\cap v_{4}|=0$.  Therefore, $|v_{4}|\geq 8$ and then, we don't have minimal reduced representation in this case. Case $t_{1234}=1$, we have $t_{ij4}=2$ and thus, $|v_{i}\cap v_{4}|=4$, that is, $v_{i}\subset v_{4}$, $i=1,2,3$. Next, $v_{4}=(1-8)$. Therefore, $V=V_{1}$. If $v_{1}=(1-12)$ we would have $\mbox{deg}\; V > 12$, which contradicts the minimality of $V$.
	
	Now, we will prove that $V_{7}$, up to isomorphism, is the unique minimal representation of $C_{7}^{4}$. Here the characteristic vector is given by $\lambda=(0001000000)$. We suppose that $V=\left\langle v_{1},v_{2},v_{3},v_{4}\right\rangle $ is a minimal representation of $C_{7}^{4}$. Then:
	\begin{eqnarray*}
		|v_{1}|&\equiv & |v_{2}|\equiv |v_{3}|\equiv 0\;(\mbox{mod}\;8)\\
		|v_{4}|&\equiv & 4\;(\mbox{mod}\;8)\\
		|v_{1}\cap v_{2}|&\equiv & |v_{1}\cap v_{3}|\equiv |v_{2}\cap v_{3}|\equiv 0 \;(\mbox{mod}\;4)\\
		|v_{1}\cap v_{4}|&\equiv & |v_{2}\cap v_{4}|\equiv |v_{3}\cap v_{4}|\equiv 0 \;(\mbox{mod}\;4)
	\end{eqnarray*}
	
	Let $v_{1}=(1-8)$, so $|v_{1}\cap v_{2}|=|v_{1}\cap v_{3}|=4$ and hence, $t_{123}=1$ or $3$. Suppose $v_{2}=(1-4,9-12)$, so $|v_{2}\cap v_{3}|=4$. Case $t_{123}=1$, we consider $v_{3}=(1,5-7,9-11,13)$. If $t_{1234}=0$, then $t_{ij4}=0$ or $2$. Considering that $|v_{i}\cap v_{j}|\equiv 0 \;(\mbox{mod}\;4)$, $i,j=1,2,3$, $i\neq j$, then we have only two subcases to analyze:
	\begin{itemize}
		\item $t_{ij4}=0$: In this subcase, $|v_{i}\cap v_{4}|=0$, for $ i=1,2,3$ and hence, we can assume \\$v_{4}=(14,15,16,17)$. Thus, for this case, $V=V_{7}$. 
		\item $t_{ij4}=2$: Here, $|v_{i}\cap v_{4}|=4$, for $ i=1,2,3$ and then,  $v_{4}=(2,3,5,6,9,10,14-19)$, which contradicts the minimality of $V$.
	\end{itemize}
	
	If $t_{1234}=1$, then $t_{ij4}=2$ or $4$. Analogously, we have two subcases to analyze:
	\begin{itemize}
		\item $t_{ij4}=2$: In this subcase, $|v_{i}\cap v_{4}|=4$, for $ \; i=1,2,3$, which produces $\mbox{deg}\; V > 17$, a contradiction.
		\item $t_{ij4}=4$: In this subcase, $|v_{i}\cap v_{4}|=8$, for $ \; i=1,2,3$, which also contradicts the minimality of $V$.
	\end{itemize}
	
	Now, analyzing the case $t_{123}=3$, we can suppose that $v_{3}=(1,2,3,5,9,13-15)$. If $t_{1234}=0,$ then $|v_{i}\cap v_{4}|=0$ and hence, we will have $v_{4}=(16,17,18,19)$, which contradicts the minimality of $V$. Analogously, for the cases $t_{1234}=1,2$ and $4$, we will have $\mbox{deg}\; V > 17$. 
	
	Now, let  $C_{10}^{4}$ be the code loop with $\lambda=(0001111000)$ and $V=\left\langle v_{1},v_{2},v_{3},v_{4}\right\rangle $ its minimal representation. Let $v_{1}=(1-8)$, then $|v_{1}\cap v_{2}|=2$ or $6$ and $|v_{1}\cap v_{3}|=2$ or $6$. Case $|v_{1}\cap v_{2}|=2$, we can assume $v_{2}=(1,2,9-14)$. Hence, we have $t_{123}=1$ and $|v_{2}\cap v_{3}|=4$. 
	\begin{itemize}
		\item For $|v_{1}\cap v_{3}|=2$, consider $v_{3}=(1,3,9-11,15-17)$. If $t_{1234}=0$, then $t_{124}=t_{134}=0$ and $t_{234}=0$ or $2$. Thus $|v_{1}\cap v_{4}|=2$. If $t_{234}=0$: $|v_{2}\cap v_{4}|=0$ and $|v_{3}\cap v_{4}|=0$. Thus, we have $v_{4}=(4,5,18,19)$ and, therefore, $V=V_{10}$. In the case $t_{234}=2$ we will find $v_{4}=(4,5,9,10,12,13,15,16,18-21)$, contradicting the minimality of $V$. The analyze of $t_{1234}=1$ is analogous.
		
		\item For $|v_{1}\cap v_{3}|=6$, we can consider $v_{3}=(1,3-7,9-11,15-21)$. In this case $\mbox{deg}\; V > 19$ for any analysis.
		
	\end{itemize}
	
	We don't have minimal representation in case $|v_{1}\cap v_{2}|=6$. 
	
	%%%%%%%%%%%%%%%%%%%%%%%%%%%%%%%%%%%%%%%%%%%%%%%%%%%%%%%%%%%%%%%%%%%%%%%%%%%%%%%%%%%%%%%%%%%%%%%%%%%%%%%%
	Analogously, in other cases, we prove that each $V_{i}$ is the unique minimal representation, up to isomorphism.%%%%%%%%%%%%%%%%%%%%%%%%%%%%%%%%%%%%%%%%%%%%%%%%%%%%%%%%%%%%%%%%%%%%%%%%%%%%%%%%%%%%%%%%%%

\end{proof}

%%%%%%%%%%%%%%%%%%%%%%%%%%%%%%

\begin{corollary}Each minimal representation of the code loops $C_{1}^{4},\dots,C_{16}^{4}$ has the following degree and type, respectively:
\end{corollary}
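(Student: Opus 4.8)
The plan is to recognize that the Corollary is a direct computational consequence of Theorem \ref{theorem3.6}, whose statement already exhibits explicit generators $v_{1},v_{2},v_{3},v_{4}$ of each minimal representation $V_{i}$ and whose proof establishes that each $V_{i}$ is, up to isomorphism of even codes, the unique minimal representation of $C_{i}^{4}$. First I would note that both quantities to be reported are isomorphism invariants: two isomorphic even codes differ by a coordinate permutation $\varphi \in S_{m}$, and any such $\varphi$ merely permutes the blocks of the relation $\sim$, so it preserves the degree $m$ and the multiset of block sizes, hence the type. Consequently it suffices to read off $m$ and compute the type for the specific generators listed in Theorem \ref{theorem3.6}, with no further appeal to uniqueness.

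For each fixed $i$, I would carry out the following bookkeeping. The degree is recorded as the largest coordinate index appearing among $v_{1},\dots,v_{4}$; the construction of Table \ref{tb6} places the supports on an initial segment $\{1,\dots,m\}$ with no unused index, so this requires no separate justification. Next, recalling that $j\sim \ell$ holds exactly when every generator contains both or neither of $j,\ell$, the equivalence classes are precisely the non-empty atoms $X_{S}=\{\, j : j\in v_{k}\Leftrightarrow k\in S\,\}$ indexed by subsets $S\subseteq\{1,2,3,4\}$, i.e. the fifteen Venn regions $X_{1234},X_{123},\dots,X_{4}$ of Table \ref{t3}. I would therefore assign each coordinate $j\in\{1,\dots,m\}$ to its atom according to which of the four generators contain it, tally the cardinalities $|X_{S}|$ of the non-empty atoms, and list these in ascending order to obtain the type. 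The identity $m=\sum_{S}|X_{S}|$ serves as a built-in consistency check on every case.

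The case $i=1$ illustrates the procedure: with $v_{1}=(1,2,3,4)$, $v_{2}=(1,2,5,6)$, $v_{3}=(1,3,5,7)$, $v_{4}=(1{-}8)$ one has $m=8$, and coordinates $1,\dots,8$ fall into the eight distinct atoms $X_{1234},X_{124},X_{134},X_{14},X_{234},X_{24},X_{34},X_{4}$, each of size $1$, so the type is $(11111111)$ and the degree is $8$; the remaining fifteen representations are treated identically. Since each $V_{i}$ is already known to be a minimal representation by Theorem \ref{theorem3.6}, the stated degree is automatically the minimal one and needs only to be read off. I do not expect a genuine conceptual obstacle here: the main difficulty is the volume of error-prone tallying across sixteen cases, and in particular keeping the Venn-region assignment correct in the representations with heavily overlapping, large supports (such as $V_{5}$, $V_{9}$, and $V_{15}$) and noticing which atoms collapse to the empty set, so that the type has the correct number of parts.
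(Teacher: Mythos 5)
Your proposal is correct and matches what the paper implicitly does: the corollary is a direct computation from the explicit generators in Theorem \ref{theorem3.6}, reading off the degree and tallying the sizes of the non-empty Venn-region atoms to get the type (the paper itself offers no separate proof beyond the table). Your worked case $i=1$ and the observation that degree and type are invariant under coordinate permutations are exactly the right justification.
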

%\vspace{-2cm}

%\begin{table}[ht]
	\begin{center}%\caption{Degree and type of minimal representations.}
				{\begin{tabular}{l|l|l|l|l|l} 
			\multicolumn{1}{c|}{$i$} & \multicolumn{1}{c|}{$\mbox{deg}\;V_{i}$} & \multicolumn{1}{c|}{type of $V_{i}$}&\multicolumn{1}{c|}{$i$} & \multicolumn{1}{c|}{$\mbox{deg}\;V_{i}$} & \multicolumn{1}{c}{type of $V_{i}$} \\ 
			\hline
			\multicolumn{1}{c|}{1} & \multicolumn{1}{c|}{8} & (11111111) & 
			\multicolumn{1}{c|}{2} & \multicolumn{1}{c|}{14} & (11111111222) \\ 
			\multicolumn{1}{c|}{3} & \multicolumn{1}{c|}{12} & (111111114) &
			\multicolumn{1}{c|}{4} & \multicolumn{1}{c|}{18} & (11111111226) \\ 
			\multicolumn{1}{c|}{5} & \multicolumn{1}{c|}{18} & (111111112224) &
			\multicolumn{1}{c|}{6} & \multicolumn{1}{c|}{11} & (11111114) \\ 
			\multicolumn{1}{c|}{7} & \multicolumn{1}{c|}{17} & (11113334) &
			\multicolumn{1}{c|}{8} & \multicolumn{1}{c|}{17} & (11111122223) \\ 
			\multicolumn{1}{c|}{9} & \multicolumn{1}{c|}{19} & (11111222233) &
			\multicolumn{1}{c|}{10} & \multicolumn{1}{c|}{19} & (111223333) \\ 
			\multicolumn{1}{c|}{11} & \multicolumn{1}{c|}{17} & (111122333) & 
			\multicolumn{1}{c|}{12} & \multicolumn{1}{c|}{17} & (1111112234) \\ 
			\multicolumn{1}{c|}{13} & \multicolumn{1}{c|}{17} & (111111236) &
			\multicolumn{1}{c|}{14} & \multicolumn{1}{c|}{13} & (111111223) \\ 
			\multicolumn{1}{c|}{15} & \multicolumn{1}{c|}{17} & (111111227) &
			\multicolumn{1}{c|}{16} & \multicolumn{1}{c|}{17} & (111112235) \\ 
	\end{tabular}}
	\end{center}
%\end{table}
%\vspace{-0.8cm}

Note that in the case of code loops of rank 3 and 4 the type of code loop define this loop up to isomophism. May be it is true in general case.

%\vspace{-0.7cm}
\begin{conjecture}
	Let $V_{1}$ and $V_{2}$ be representations of a code loop $L$. If this representations have the same degree and type, then $V_{1}$ and $V_{2}$ are isomorphic. 
\end{conjecture}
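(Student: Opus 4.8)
The plan is to translate the conjecture into the language of column multisets, where even-code isomorphism has a transparent description, and then to show that the loop $L$ together with the type rigidifies this multiset up to change of basis. First I would fix a basis $v_{1},\dots,v_{n}$ of $V\subseteq\mathbb{F}_{2}^{m}$ and, for each coordinate $i$, record the pattern $\{k:i\in v_{k}\}\subseteq\{1,\dots,n\}$. Two coordinates are equivalent exactly when they carry the same pattern, so the equivalence classes are indexed by nonempty subsets $S$ and their sizes form the numbers $x_{S}=|X_{S}|$ of Tables \ref{t3}--\ref{tb6}. In this description the degree is $m=\sum_{S\neq\emptyset}x_{S}$, the type is the multiset of nonzero values $\{x_{S}:x_{S}>0\}$, and by the standard dictionary between linear codes and multisets of columns, $V_{1}$ and $V_{2}$ are isomorphic even codes if and only if their functions $x$ and $x'$ are carried into one another by some $g\in GL_{n}(\mathbb{F}_{2})$ acting on the index subsets. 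Discarding coordinates that lie in no codeword, the degree is then determined by the type, so the essential hypothesis is equality of types.

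Next I would record how $L$ constrains $x$. The weights are the zeta transform $t_{T}=\sum_{S\supseteq T}x_{S}$, invertible by M\"obius inversion, and the loop structure fixes them modulo small powers of two ($t_{i}$ and $t_{ij}$ even, $t_{ijk}\equiv\lambda_{ijk}\ (\mathrm{mod}\,2)$, together with the square and commutator congruences read off from the characteristic vector exactly as in Table \ref{relations}). A loop isomorphism $L(V_{1})\to L(V_{2})$ sends the center to the center, hence induces a linear isomorphism $V_{1}\to V_{2}$ preserving squares, commutators and associators; thus the triples of forms attached to the two bases are $GL_{n}(\mathbb{F}_{2})$-equivalent. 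After replacing the basis of $V_{2}$ by a suitable image we may assume $x_{1}$ and $x_{2}$ induce one and the same form data on the fixed space $\mathbb{F}_{2}^{n}$, so that both satisfy an identical system of congruences on the $t_{T}$.

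The conjecture is then reduced to a purely combinatorial rigidity statement: if $x_{1}$ and $x_{2}$ are nonnegative integer solutions of this congruence system with the same multiset of values, then $x_{2}=x_{1}\circ\sigma$ for some $\sigma$ in the automorphism group of the form data (a subgroup of $GL_{n}(\mathbb{F}_{2})$), which yields the desired even-code isomorphism. To prove this I would induct on the rank $n$. Fixing a generator, say $v_{n}$, the regions split into those with $n\in S$, which are contained in $v_{n}$, and those with $n\notin S$, which are disjoint from it; the former give a representation carried by the support of $v_{n}$ and the latter one carried by its complement, and each is a representation of a rank-$(n-1)$ code loop (a nuclear quotient of $L$ and its complementary ``link''). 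The induction hypothesis then matches the two halves, each up to a rank-$(n-1)$ automorphism.

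The main obstacle — and the reason this is stated only as a conjecture — is the gluing step. A priori the same multiset of class sizes can be distributed over the $2^{n}-1$ regions in many ways, and one must show that the congruences imposed by $L$, refined by the weight enumerator (the statistic $\sum_{S}x_{S}\,[\,|S\cap T|\ \text{odd}\,]$, which is already discriminating enough to separate $V_{16}^{1}$ from $V_{16}^{2}$), leave no room for two distributions that fail to be related by an automorphism of $L$. Verifying that this weight-enumerator refinement of the type is strong enough, and that the inductive matchings on the two halves can always be reconciled into a single automorphism, is precisely the content that our GAP enumeration confirms only in the cases $n=3$ and $n=4$.
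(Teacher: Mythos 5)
The statement you are addressing is presented in the paper only as a conjecture: the authors give no proof, just the observation that it holds for ranks $3$ and $4$ by inspection of the explicit classifications in Theorems \ref{th3.4} and \ref{theorem3.6}. Your write-up is candid that it is not a proof either, and that assessment is correct: what you give is a reduction plus a programme. The reduction itself is sound and is essentially the framework the paper already uses implicitly --- recording for each coordinate its pattern $S\subseteq\{1,\dots,n\}$ of generators containing it, so that the class sizes $x_S$ and the weights $t_T=\sum_{S\supseteq T}x_S$ are related by M\"obius inversion (this is exactly the linear system (\ref{eq.linearsystem1}) together with Tables \ref{t3} and \ref{tb4}), and observing that a loop isomorphism induces a $GL_n(\mathbb{F}_2)$-equivalence of the square/commutator/associator data while a code isomorphism is a $GL_n(\mathbb{F}_2)$-equivalence of the multiset of column patterns. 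But the conjecture \emph{is} precisely the rigidity statement you isolate at the end, and you leave it unproved; naming the obstacle does not discharge it.

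Two concrete problems beyond the admitted gap. First, the inductive step would fail as stated: splitting the coordinates by membership in $v_n$, the half disjoint from $v_n$ carries the vectors $v_i\setminus v_n$, whose weights $t_i-t_{in}$ need not be divisible by $4$ (for instance $v_1=(1,\dots,8)$ and $v_4=(1,2,9,10)$, which occur in $V_{14}$ of Theorem \ref{theorem3.6}, give $|v_1\setminus v_4|=6$), so the two halves are not in general doubly even codes and are not representations of rank-$(n-1)$ code loops; the induction has no well-defined smaller instance to recurse on. Second, your remark that after discarding coordinates lying in no codeword the degree is determined by the type conceals a genuine issue: the paper's type does not distinguish the equivalence class of such coordinates from the other classes, so a complete proof (or a counterexample) must handle a pair of representations of equal degree and type in which one, but not the other, has a nonempty zero class. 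In short, your approach is a reasonable formalization of what the paper verifies only computationally for $n=3,4$, but the statement remains open and your argument does not close it.
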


\section*{Acknowledgments}

The first author thanks  Institute of Mathematics and Statistics, University of S\~ao Paulo, Brazil that gave the space to realize this research. The second author thanks  FAPESP and CNPq for financial support. The third author thanks FAPESP(Brazil), grant 2018/11292-6 for financial support.

%%%%%%%%%%%%%%%%%%%%


\begin{thebibliography}{0}
	
	\bibitem{4}O. Chein and E.G. Goodaire, {\em Moufang Loops with a Unique Nonidentity Commutator (Associator, Square)}, J. Algebra 130 (1990), 369-384.
	
	\bibitem{2} R. L. Griess Jr., {\em Code loops}, J.Algebra 100 (1986), 224-234.
	
		
	\bibitem{AR} A. Grishkov and R. Miguel Pires, {\em Variety of loops generated by code loops}, International Journal of Algebra and Computation, Volume 28, No. 1 (2018), 163-177.
	

	\bibitem{8} R. Miguel Pires, {\em Loops de c\'{o}digo: automorfismos e representa\c c\~{o}es}. Tese de Doutorado, Instituto de Matem\'atica e Estat\'istica, Universidade de S\~ao Paulo, S\~ao Paulo (2011).
	
	
	
	
	
\end{thebibliography}
\end{document}